\numberwithin{equation}{section}
\numberwithin{figure}{section}
\theoremstyle{plain}
\newtheorem{thm}{\protect\theoremname}[section]
  \theoremstyle{plain}
  \newtheorem{lem}[thm]{\protect\lemmaname}
  \theoremstyle{plain}
  \newtheorem{cor}[thm]{\protect\corollaryname}
  \theoremstyle{definition}
  \newtheorem{defn}[thm]{\protect\definitionname}
\newcommand{\GcDM}{\Gamma_{c}(\overline{\mathcal{D}},\mathfrak{M}(d,n))}
\newcommand{\GhDM}{\Gamma_{h}(\overline{\mathcal{D}},\mathfrak{M}(d,n))}
\newcommand{\MnC}{M_{n}(\mathbb{C})}
\newcommand{\MnCd}{M_{n}(\mathbb{C})^d}
  \providecommand{\corollaryname}{Corollary}
  \providecommand{\definitionname}{Definition}
  \providecommand{\lemmaname}{Lemma}
\providecommand{\theoremname}{Theorem}
\begin{document}

\title{Boundaries, Bundles, and Trace Algebras}

\dedicatory{To the memory of Bill Arveson}

\author{Erin Griesenauer}

\address{Department of Mathematics \\
University of Iowa\\
 Iowa City, IA 52242}

\email{erin-griesenauer@uiowa.edu}

\author{Paul S. Muhly}

\address{Department of Mathematics \\
University of Iowa\\
 Iowa City, IA 52242}

\email{paul-muhly@uiowa.edu }

\author{Baruch Solel}

\address{Department of Mathematics\\
 Technion\\
32000 Haifa, Israel}

\email{mabaruch@techunix.technion.ac.il}

\maketitle
\begin{abstract}
We describe how noncommutative function algebras built from noncommutative
functions in the sense of \cite{K-VV2014} may be studied as subalgebras
of homogeneous $C^{*}$-algebras.
\end{abstract}

\section{Introduction\label{sec:Introduction}}

This note grew out of efforts to apply Arveson's boundary theory \cite{Arv1969a,Arv2008a,Arv2011,Arv2010b}
to operator algebras that arise naturally in free analysis. They are
built from the representation theory of free algebras, but our point
of view was inspired to a great extent by the recent book and perspective
of D. Kaliuzhnyi-Verbovetskyi and V. Vinnikov \cite{K-VV2014}. In
a sense, our purpose is to present ``a proof of concept''. The problem
which drew us to the topics discussed here remains unsolved. We will
discuss it in the final section, Section \ref{sec:Concluding-Remarks}.
Our efforts to solve this problem led us to methods from algebraic
geometry, geometric invariant theory and polynomial identity algebras
- subjects largely unfamiliar to us. Nevertheless, we hope to show
that these subjects carry useful information for free analysis and
its associated operator algebras. We have not striven for maximal
generality in the theorems and proofs presented in this paper. Rather,
we have tried to present a story whose purpose is to stimulate interest
among the operator algebra community in the algebras described here
and to stimulate future research. Consequently, the Introduction is
the bulk of the paper. It carries most of the narrative and the statements
of the main theorems. Most proofs and details are relegated to subsequent
(shorter) sections. 

The fundamental feature of the functions that we want to exploit is
that they are (\emph{holomorphic}) \emph{matrix concomitants}. Various
algebras they generate will be identified as subalgebras of homogeneous
$C^{*}$-algebras. To describe the functions and algebras, we need
to develop notation and provide background information. Throughout
this note $G$ will denote the projective linear group, $PGL(n,\mathbb{C})$,
which will be viewed as the group of \emph{automorphisms} of the full
algebra of complex $n\times n$ matrices, $M_{n}(\mathbb{C})$. The
subgroup of $G$ that preserves the usual $*$-structure on $M_{n}(\mathbb{C})$
is the projective unitary group, $PU(n,\mathbb{C})$. It will be denoted
by $K$. We frequently identify $G$ with $GL(n,\mathbb{C})$ and
write $s^{-1}as$, $a\in M_{n}(\mathbb{C})$, $s\in G$, for what
should be written as $a\cdot s$ or $s^{-1}\cdot a$. This should
cause no confusion since when $GL(n,\mathbb{C})$ appears in this
note, it always acts through conjugation of matrices. We study actions
of $G$ on $d$-tuples of $n\times n$ matrices, $M_{n}(\mathbb{C})^{d}$,
via the ``diagonal'' action. That is, we write elements of $\MnCd$
as $\mathfrak{z}=(Z_{1},Z_{2},\cdots,Z_{d})$, with $Z_{i}\in\MnC$,
and we write $\mathfrak{z}\cdot s=s^{-1}\mathfrak{z}s$ for $(s^{-1}Z_{1}s,s^{-1}Z_{2}s,\cdots,s^{-1}Z_{d}s)$,
$s\in G$. We are interested in domains $\mathcal{D}\subseteq\MnCd$
that are invariant under this action of $G$. A function $f$ defined
on such a domain $\mathcal{D}$ and mapping to $\MnC$ is called a
\emph{matrix concomitant} if $f$ satisfies the equation 
\begin{equation}
f(s^{-1}\mathfrak{z}s)=s^{-1}f(\mathfrak{z})s,\label{eq:HolMatCon}
\end{equation}
for all $s\in G$ and all $\mathfrak{z}\in\mathcal{D}$. The collection
of all holomorphic matrix concomitants defined on a domain $\mathcal{D}$
will be denoted $Hol(\mathcal{D},M_{n}(\mathbb{C}))^{G}$.  These
are the principal objects of study in this note. Unless explicitly
stated otherwise $d$ and $n$ will be assumed to be at least $2$
when discussing $d$-tuples of $n\times n$ matrices. 

Examples of holomorphic matrix concomitants are easy to come by. For
$i=1,2,\cdots,d,$ we let $\mathcal{Z}_{i}$ denote the function on
$M_{n}(\mathbb{C})^{d}$ defined by 
\[
\mathcal{Z}_{i}(\mathfrak{z}):=Z_{i},\qquad\mathfrak{z}=(Z_{1},Z_{2},\cdots,Z_{d}).
\]
That is, the $\mathcal{Z}_{i}$ are just the \emph{matrix} coordinate
functions defined on $M_{n}(\mathbb{C})^{d}$. Clearly, each $\mathcal{Z}_{i}$
is a holomorphic matrix concomitant. Since matrix concomitants form
an algebra under pointwise sums and products, the algebra generated
by the $\mathcal{Z}_{i}$ consists of holomorphic matrix concomitants.
This algebra is denoted $\mathbb{G}_{0}(d,n)$ and is called the \emph{algebra
of $d$ generic $n\times n$ matrices}. Evidently, it is the image
of the free algebra on $d$ variables, $\mathbb{C}\langle X_{1},X_{2},\cdots,X_{d}\rangle$
under the map that takes $X_{i}$ to $\mathcal{Z}_{i}$, $i=1,2,\cdots,d$.
Another important algebra of holomorphic matrix concomitants is built
from the algebra polynomial matrix \emph{invariants,} $\mathbb{I}_{0}(d,n)$,
which is the set of all polynomial functions $p:M_{n}(\mathbb{C})^{d}\to\mathbb{C}$
such that $p(s^{-1}\mathfrak{z}s)=p(\mathfrak{z})$, $s\in G$, $\mathfrak{z}\in M_{n}(\mathbb{C})^{d}$.
We identify $p\in\mathbb{I}_{0}(d,n)$ with the matrix-valued function
$\mathfrak{z}\to p(\mathfrak{z})I_{n}$, obtaining a polynomial matrix
concomitant. The algebra generated by $\mathbb{G}_{0}(d,n)$ and $\mathbb{I}_{0}(d,n)$
is denoted $\mathbb{S}_{0}(d,n)$ and is called the \emph{trace algebra}
of the generic matrices. In \cite[Theorem 2.1]{Procesi1976}, Procesi
proved that $\mathbb{S}_{0}(d,n)$ is precisely the set of all \emph{polynomial}
matrix concomitants. That is, $\mathbb{S}_{0}(d,n)$ consists of all
the matrix concomitants whose entries are polynomial functions of
$dn^{2}$ variables, organized as $d$-tuples of $n\times n$ matrices.
\begin{lem}
$Hol(M_{n}(\mathbb{C})^{d},M_{n}(\mathbb{C}))^{G}$ is the closure
of $\mathbb{S}_{0}(d,n)$ in topology of uniform convergence on compact
subsets of $M_{n}(\mathbb{C})^{d}$.\end{lem}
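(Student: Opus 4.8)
The plan is to prove the two inclusions separately. That the closure of $\mathbb{S}_{0}(d,n)$ lies in $Hol(\MnCd,\MnC)^{G}$ is the easy half: if a sequence in $\mathbb{S}_{0}(d,n)$ converges uniformly on the compact subsets of $\MnCd\cong\mathbb{C}^{dn^{2}}$, then its limit is holomorphic (Weierstrass, applied to the matrix entries), and the concomitant identity \eqref{eq:HolMatCon} merely relates the values of a function at the two fixed points $\mathfrak{z}$ and $s^{-1}\mathfrak{z}s$, so it passes to pointwise limits; hence the limit is again a holomorphic matrix concomitant.

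For the reverse inclusion I would begin with $f\in Hol(\MnCd,\MnC)^{G}$ and use that, since $f$ is entire on $\mathbb{C}^{dn^{2}}$, its multivariable Taylor series converges to $f$ uniformly on compact sets. Let $p_{N}\colon\MnCd\to\MnC$ denote the partial sum consisting of all homogeneous terms of total degree at most $N$; then each $p_{N}$ is a polynomial map and $p_{N}\to f$ uniformly on compacta, but the $p_{N}$ need not be concomitants. I would repair this by averaging over the unitary group $U(n)$, whose conjugation action on $\MnC$ is the one induced by $K=PU(n,\mathbb{C})$: with $du$ the normalized Haar measure on $U(n)$, set
\[
\tilde{p}_{N}(\mathfrak{z})\;=\;\int_{U(n)}u\,p_{N}(u^{-1}\mathfrak{z}u)\,u^{-1}\,du .
\]
For fixed $u$ the integrand is, as a function of $\mathfrak{z}$, a polynomial map of degree at most $N$ whose matrix coefficients depend continuously on $u$, so integrating over the compact group $U(n)$ produces again a polynomial map $\tilde{p}_{N}$ of degree at most $N$. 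The substitution $w=vu$ in the Haar integral, together with left invariance of $du$ and the identity $u^{-1}v^{-1}\mathfrak{z}vu=(vu)^{-1}\mathfrak{z}(vu)$, then shows $\tilde{p}_{N}(v^{-1}\mathfrak{z}v)=v^{-1}\tilde{p}_{N}(\mathfrak{z})v$ for all $v\in U(n)$.

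The next step is to promote this $U(n)$-equivariance to equivariance under all of $G$ by the unitarian trick. Fixing $\mathfrak{z}$ and multiplying the desired identity $s\,\tilde{p}_{N}(s^{-1}\mathfrak{z}s)=\tilde{p}_{N}(\mathfrak{z})\,s$ by a sufficiently high power of $\det s$ to clear the denominators coming from the entries of $s^{-1}$, one gets a matrix identity whose entries are polynomial in the entries of $s\in GL(n,\mathbb{C})$; it holds on $U(n)$, which is Zariski dense in $GL(n,\mathbb{C})$, hence it holds identically, and therefore $\tilde{p}_{N}$ satisfies \eqref{eq:HolMatCon} for every $s\in G$. Thus $\tilde{p}_{N}$ is a polynomial matrix concomitant, so $\tilde{p}_{N}\in\mathbb{S}_{0}(d,n)$ by Procesi's theorem \cite[Theorem 2.1]{Procesi1976}. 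To conclude I would use that $f$ itself satisfies \eqref{eq:HolMatCon}, so averaging $f$ over $U(n)$ returns $f$, whence
\[
f(\mathfrak{z})-\tilde{p}_{N}(\mathfrak{z})\;=\;\int_{U(n)}u\,\bigl(f(u^{-1}\mathfrak{z}u)-p_{N}(u^{-1}\mathfrak{z}u)\bigr)\,u^{-1}\,du .
\]
Given a compact $C\subseteq\MnCd$, the set $C'=\{\,u^{-1}\mathfrak{z}u:u\in U(n),\ \mathfrak{z}\in C\,\}$ is compact; since $\|u\|=\|u^{-1}\|=1$ in the operator norm and $du$ is a probability measure, $\sup_{\mathfrak{z}\in C}\|f(\mathfrak{z})-\tilde{p}_{N}(\mathfrak{z})\|\le\sup_{C'}\|f-p_{N}\|\to0$. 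Hence $\tilde{p}_{N}\to f$ uniformly on compact sets, completing the argument.

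I expect the main obstacle to be organizational rather than conceptual: one must keep straight that the equivariance built into the definition of a matrix concomitant is with respect to the \emph{noncompact} group $G=PGL(n,\mathbb{C})$, while the averaging can only be carried out over a \emph{compact} group, and one must check that this averaging genuinely returns polynomial maps so that the Zariski-density step and Procesi's theorem apply. Once that interplay is handled, the remaining ingredients---convergence of the Taylor partial sums and the Haar-integral estimate---are routine.
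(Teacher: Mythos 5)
Your proposal is correct and follows essentially the same route as the paper: approximate $f$ by polynomial maps converging uniformly on compacta, average over the compact group to obtain polynomial $K$-concomitants that still converge to $f$, upgrade to $G$-concomitance by Weyl's unitarian trick (Zariski density of the unitary group), and finish with Procesi's theorem. You have merely spelled out the ``easy estimates'' and the density argument that the paper leaves implicit.
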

\begin{proof}
This is an easy application of Weyl's unitarian trick, which is often
regarded as the assertion that the maximal compact subgroup of a reductive
algebraic group is Zariski dense in the algebraic group \cite[Page 224 ff]{Procesi_2007}.
In our situation, it means that any polynomial function on $M_{n}(\mathbb{C})^{d}$
that is invariant under the action of $K$ is automatically invariant
under the action of $G$. Given $f\in Hol(M_{n}(\mathbb{C})^{d},M_{n}(\mathbb{C}))^{G}$
choose a sequence $\{p_{l}\}_{l\geq1}$ of $n\times n$ matrices,
whose entries are polynomial functions on $M_{n}(\mathbb{C})^{d}$,
that converges to $f$ uniformly on compact subsets of $M_{n}(\mathbb{C})^{d}$,
and define
\[
\widetilde{p}_{l}(\mathfrak{z})=\int_{K}kp_{l}(k^{-1}\mathfrak{z}k)k^{-1}\,dk,
\]
where ``$dk$'' denotes Haar measure on $K$. Then easy estimates
show that the $\widetilde{p}_{l}$ converge to $f$ uniformly on compact
subsets of $M_{n}(\mathbb{C})^{d}$. The $\widetilde{p}_{l}$ satisfy
the equation $\widetilde{p}_{l}(k^{-1}\mathfrak{z}k)=k^{-1}\widetilde{p}(\mathfrak{z})k$
for all $k\in K$. Since the $\widetilde{p}_{l}$ are all polynomials,
they are matrix concomitants by Weyl's unitarian trick, and Procesi's
theorem (loc. cit.) completes the proof.
\end{proof}
Procesi also proved in \cite[Theorem 3.4a]{Procesi1976} that $\mathbb{I}_{0}(d,n)$
is generated by the traces $tr(Z_{i_{1}}Z_{i_{2}}\cdots,Z_{i_{s}})$,
where $s\leq2^{n}-1$. Thus $\mathbb{I}_{0}(d,n)$ is finitely generated.
We may therefore consider the spectrum of $\mathbb{I}_{0}(d,n)$,
$Q(d,n)$, as an abstract affine algebraic variety defined over $\mathbb{C}$.
The inclusion of $\mathbb{I}_{0}(d,n)$ in the polynomial functions
mapping $M_{n}(\mathbb{C})^{d}$ to $\mathbb{C}$ induces, by way
of duality, a (regular) map $\pi_{0}$ from $M_{n}(\mathbb{C})^{d}$
onto $Q(d,n)$. 

If $\mathcal{V}(d,n)$ denotes the set of all $\mathfrak{z}=(Z_{1},Z_{2},\cdots,Z_{d})\in M_{n}(\mathbb{C})^{d}$
such that $Z_{1},Z_{2},\cdots,Z_{d}$ generate $M_{n}(\mathbb{C})$
as an algebra over $\mathbb{C}$, then $\mathcal{V}(d,n)$ is a $G$-invariant,
Zariski-open subset of $M_{n}(\mathbb{C})^{d}$, which we call the
set of \emph{irreducible points} of $M_{n}(\mathbb{C})^{d}$. Another
fundamental theorem of Procesi \cite[Theorem 5.10]{Procesi1974} asserts
that the image of $\mathcal{V}(d,n)$ under $\pi_{0}$, which we denote
by $Q_{0}(d,n)$, is an open subset of the smooth points of $Q(d,n)$
and that $(\mathcal{V}(d,n),\pi_{0},Q_{0}(d,n))$ has the structure
of a holomorphic principal $G$-bundle, denoted here by $\mathfrak{V}(d,n)$. 

We write $\mathfrak{M}(d,n)$ for the associated fibre bundle with
fibre $M_{n}(\mathbb{C})$, i.e., the bundle space of $\mathfrak{M}(d,n)$
is $\mathcal{V}(d,n)\times_{G}M_{n}(\mathbb{C})$, where $G$ acts
on $\mathcal{V}(d,n)\times M_{n}(\mathbb{C})$ via the formula $(\mathfrak{z},A)\cdot s=(s^{-1}\mathfrak{z}s,s^{-1}As)=(\mathfrak{z}\cdot s,s^{-1}\cdot A)$.
The projection $\pi:\mathcal{V}(d,n)\times_{G}M_{n}(\mathbb{C})\to Q_{0}(d,n)$
is given by formula $\pi([\mathfrak{z},A])=[\mathfrak{z}]$, in which
we adopt the convention that when $G$ acts on a set, say, $X$, then
the orbit of a point $x\in X$ is written $[x]$, i.e., $[x]:=\{x\cdot g\mid g\in G\}$.
Thus, in particular, $\pi([\mathfrak{z},A])=\pi_{0}(\mathfrak{z})$.

Our first result identifies the holomorphic cross sections of $\mathfrak{M}_{n}(d,n)$,
$\Gamma_{h}(Q_{0}(d,n),\mathfrak{M}(d,n))$, with the holomorphic
matrix concomitants on $\mathcal{V}(d,n)$. While the proof will be
presented in Section \ref{sec:Proof_of_thm_Cross section}, it will
be helpful to reflect here on the connection between cross sections
and concomitants. Everything boils down to parsing this equation:
\begin{equation}
\sigma([\mathfrak{z}])=[\mathfrak{z},\phi(\mathfrak{z})],\label{eq:Fund_equat}
\end{equation}
$\mathfrak{z}\in\mathcal{V}(d,n)$, where $\sigma$ is a cross section
of $\mathfrak{M}(d,n)$ and $\phi$ is a matrix concomitant. The key
for this is to note that if we are given $\mathfrak{u}\in Q_{0}(d,n)$
and $\mathfrak{a}\in\mathfrak{M}(d,n)$ such that $\pi(\mathfrak{a})=\mathfrak{u}$,
then once $\mathfrak{z}\in\mathcal{V}(d,n)$ is chosen so that $\pi_{0}(\mathfrak{z})=\mbox{\ensuremath{\mathfrak{u}}}$,
there is one and only one $A\in M_{n}(\mathbb{C})$ such that $\mathfrak{a}=[\mathfrak{z},A]$.
Now let's read \eqref{eq:Fund_equat} from left to right and suppose
$\sigma$ is a cross section of $\mathfrak{M}(d,n)$. If $\mathfrak{u}\in Q_{0}(d,n)$,
then for $\mathfrak{z}\in\pi_{0}^{-1}(\mathfrak{u}),$ there is one
and only one matrix $\phi(\mathfrak{z})\in M_{n}(\mathbb{C})$ such
that $[\mathfrak{z},\phi(\mathfrak{z})]=\sigma(\mathfrak{u})$. This
defines $\phi$ on $\pi_{0}^{-1}(\mathfrak{u})$ for each $\mathfrak{u}\in Q_{0}(d,n)$,
and so the $M_{n}(\mathbb{C})$-valued function, $\phi$, is well
defined on all of $\mathcal{V}(d,n)$. On the other hand, $\pi_{0}(\mathfrak{z}\cdot s)=\mathfrak{u}$
for any $s\in G$. So $\pi([\mathfrak{z}\cdot s,\phi(\mathfrak{z}\cdot s)])=\mathfrak{u}$,
too. But by definition of the action of $G$ on $\mathcal{V}(d,n)\times M_{n}(\mathbb{C})$,
$[\mathfrak{z}\cdot s,\phi(\mathfrak{z}\cdot s)]=[\mathfrak{z},s\cdot\phi(\mathfrak{z}\cdot s)]$,
which shows that $s\cdot\phi(\mathfrak{z}\cdot s)=\phi(\mathfrak{z})$,
i.e., $\phi(\mathfrak{z}\cdot s)=s^{-1}\phi(\mathfrak{z})s$. Reading
\eqref{eq:Fund_equat} from right to left, suppose $\phi$ is a matrix
concomitant on $\mathcal{V}(d,n)$. Then $[\mathfrak{z},\phi(\mathfrak{z})]$
is an element in $\mathfrak{M}(d,n)$ such that $\pi([\mathfrak{z},\phi(\mathfrak{z})])=\pi_{0}(\mathfrak{z})=[\mathfrak{z}]$.
But for each $s\in G$, $\pi([\mathfrak{z}\cdot s,\phi(\mathfrak{z}\cdot s)])=\pi_{0}(\mathfrak{z}\cdot s)=[\mathfrak{z}]$,
too, and $[\mathfrak{z}\cdot s,\phi(\mathfrak{z}\cdot s)]=[\mathfrak{z},s\cdot\phi(\mathfrak{z}\cdot s)]=[\mathfrak{z},\phi(\mathfrak{z})]$
because $\phi$ is a concomitant. Therefore, if we set $\sigma([\mathfrak{z}])=[\mathfrak{z},\phi(\mathfrak{z})]$,
then $\sigma$ is well defined.

Henceforth, then, given a matrix concomitant $\phi$, we shall write
$\sigma_{\phi}$ for the cross section of $\mathfrak{M}(d,n)$ determined
by $\phi$ via \eqref{eq:Fund_equat} and conversely, given a cross
section $\sigma$ of $\mathfrak{M}(d,n)$, we shall write $\phi_{\sigma}$
for the matrix concomitant defined through \eqref{eq:Fund_equat}. 
\begin{thm}
\label{thm: Cross sectional Algebra} For $d\geq2$ and $n\geq2$,
the correspondence $\phi\to\sigma_{\phi}$ defines an algebra isomorphism
$\Psi$ from $Hol(\mathcal{V}(d,n),M_{n}(\mathbb{C}))^{G}$ onto $\Gamma_{h}(Q_{0}(d,n),\mathfrak{M}(d,n))$,
with inverse given by $\sigma\to\phi_{\sigma}$. If, in addition,
$d$ or $n$ is greater than $2$, then every concomitant in $Hol(\mathcal{V}(d,n),M_{n}(\mathbb{C}))^{G}$
admits a unique extension to a concomitant in $Hol(M_{n}(\mathbb{C})^{d},M_{n}(\mathbb{C}))^{G}$.
The domain $\mathcal{V}(2,2)$, on the other hand, is a domain of
holomorphy and there are concomitants in $Hol(\mathcal{V}(2,2),M_{2}(\mathbb{C}))^{G}$
that do not extend to $M_{2}(\mathbb{C})^{2}$.
\end{thm}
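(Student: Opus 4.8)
\emph{The isomorphism $\Psi$.} The discussion preceding the statement already establishes that $\phi\mapsto\sigma_{\phi}$ and $\sigma\mapsto\phi_{\sigma}$ are mutually inverse bijections between the $M_{n}(\mathbb{C})$-valued concomitants on $\mathcal{V}(d,n)$ and the cross sections of $\mathfrak{M}(d,n)$, so two points remain. First, $\Psi$ is an algebra map: since $G=PGL(n,\mathbb{C})$ acts on $M_{n}(\mathbb{C})$ by $\mathbb{C}$-algebra automorphisms, the fibres of $\mathfrak{M}(d,n)$ carry a well-defined $\mathbb{C}$-algebra structure, and reading $\sigma_{\phi_{1}}(\mathfrak{u})+\sigma_{\phi_{2}}(\mathfrak{u})=[\mathfrak{z},\phi_{1}(\mathfrak{z})+\phi_{2}(\mathfrak{z})]$ and the analogous product identity off \eqref{eq:Fund_equat} (for any $\mathfrak{z}\in\pi_{0}^{-1}(\mathfrak{u})$) gives $\sigma_{\phi_{1}+\phi_{2}}=\sigma_{\phi_{1}}+\sigma_{\phi_{2}}$ and $\sigma_{\phi_{1}\phi_{2}}=\sigma_{\phi_{1}}\sigma_{\phi_{2}}$. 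Second, $\Psi$ and $\Psi^{-1}$ preserve holomorphy: because $(\mathcal{V}(d,n),\pi_{0},Q_{0}(d,n))$ is a \emph{holomorphic} principal $G$-bundle, every point of $Q_{0}(d,n)$ has a neighbourhood $U$ carrying a holomorphic section $\tau\colon U\to\mathcal{V}(d,n)$, and in the holomorphic trivialization of $\mathfrak{M}(d,n)$ over $U$ induced by $\tau$ the section $\sigma_{\phi}$ becomes the map $u\mapsto\phi(\tau(u))$. Thus $\phi$ holomorphic forces $\sigma_{\phi}$ holomorphic; conversely, if $\sigma_{\phi}$ is holomorphic then $u\mapsto\phi(\tau(u))$ is holomorphic on $U$, and since $(u,g)\mapsto\tau(u)\cdot g$ is a biholomorphism $U\times G\to\pi_{0}^{-1}(U)$, the concomitant identity $\phi(\tau(u)\cdot g)=g^{-1}\phi(\tau(u))g$ shows $\phi$ is holomorphic on $\pi_{0}^{-1}(U)$, hence on $\mathcal{V}(d,n)$.

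\emph{Extension when $d>2$ or $n>2$.} The plan is to show that $R:=\MnCd\setminus\mathcal{V}(d,n)$ is an algebraic subset of codimension at least $2$ and then apply a removable-singularity theorem. By Burnside's theorem $\mathfrak{z}=(Z_{1},\dots,Z_{d})\notin\mathcal{V}(d,n)$ exactly when the $Z_{i}$ have a common invariant subspace of some dimension $k$ with $1\le k\le n-1$, so $R=\bigcup_{k=1}^{n-1}R_{k}$, where $R_{k}$ is the image in $\MnCd$ of the vector bundle over $\mathrm{Gr}(k,n)$ whose fibre over a $k$-plane $W$ is $\{\mathfrak{z}:Z_{i}W\subseteq W\text{ for all }i\}$. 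Since $\{A\in M_{n}(\mathbb{C}):AW\subseteq W\}$ has dimension $n^{2}-k(n-k)$, one gets $\dim R_{k}\le k(n-k)+d\bigl(n^{2}-k(n-k)\bigr)$, hence $\mathrm{codim}\,R_{k}\ge(d-1)k(n-k)\ge(d-1)(n-1)$ inside $\MnCd$, which is $\ge 2$ precisely when $(d,n)\ne(2,2)$. I would then invoke the second Riemann extension theorem to extend each matrix entry of a given concomitant $\phi$ holomorphically across $R$, obtaining a holomorphic $M_{n}(\mathbb{C})$-valued function $\widetilde{\phi}$ on $\MnCd$; for each fixed $s\in G$ the two holomorphic maps $\mathfrak{z}\mapsto\widetilde{\phi}(s^{-1}\mathfrak{z}s)$ and $\mathfrak{z}\mapsto s^{-1}\widetilde{\phi}(\mathfrak{z})s$ agree on the dense open set $\mathcal{V}(d,n)$, hence everywhere, so $\widetilde{\phi}$ is again a concomitant; uniqueness of the extension is uniqueness of holomorphic extension on the connected manifold $\MnCd$.

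\emph{The case $(2,2)$.} Here the estimate only yields $\mathrm{codim}\,R\ge 1$, and the point is that this is sharp: $R=R_{1}$ is the image of a rank-$6$ vector bundle over $\mathbb{P}^{1}$, so irreducible of dimension $7$, and because a generic reducible pair of $2\times 2$ matrices has a single common eigenline that bundle maps generically one-to-one onto $R_{1}$, so $R=V(h)$ is an irreducible hypersurface in $M_{2}(\mathbb{C})^{2}\cong\mathbb{C}^{8}$. As $R$ is $G$-invariant, $h$ is a semi-invariant, $h(\mathfrak{z}\cdot s)=c_{s}h(\mathfrak{z})$ for a homomorphism $c\colon G\to\mathbb{C}^{*}$, and pulling $c$ back along $SL(2,\mathbb{C})\to PGL(2,\mathbb{C})$ and using that $SL(2,\mathbb{C})$ is perfect shows $c\equiv 1$, i.e.\ $h\in\mathbb{I}_{0}(2,2)$. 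Hence $\mathcal{V}(2,2)=\{h\ne 0\}$ is the complement of a complex hypersurface in $\mathbb{C}^{8}$, so it is a domain of holomorphy (for instance $|\mathfrak{z}|^{2}-\log|h(\mathfrak{z})|$ is a plurisubharmonic exhaustion). Finally, since $h$ is a scalar invariant, $\mathcal{Z}_{1}/h$ is a holomorphic matrix concomitant on $\mathcal{V}(2,2)$, and its $(1,1)$ entry $(Z_{1})_{11}/h$ is a rational function in lowest terms with the irreducible polynomial $h$ in the denominator, hence unbounded near $V(h)$; so $\mathcal{Z}_{1}/h$ has no holomorphic extension to $M_{2}(\mathbb{C})^{2}$.

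I expect the main obstacle to be the codimension bookkeeping for the reducible locus $R$: making the dimension estimate for each $R_{k}$ sharp enough to yield $\mathrm{codim}\,R\ge 2$ exactly when $(d,n)\ne(2,2)$, and, in the exceptional case, showing $R$ is an honest hypersurface whose reduced defining polynomial may be taken $G$-invariant -- which is precisely what makes $\mathcal{Z}_{1}/h$ a genuine concomitant. The complex-analytic inputs (removability of analytic singular sets of codimension $\ge 2$, and pseudoconvexity of the complement of a complex hypersurface in $\mathbb{C}^{N}$) are standard.
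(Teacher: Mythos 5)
Your proposal is correct, and for the first two parts it follows essentially the paper's own route: the holomorphy of $\Psi$ and $\Psi^{-1}$ is checked in local holomorphic trivializations exactly as in Section 2 (your local section $\tau$ is the paper's $F(\cdot,e)$), and the extension across $\MnCd\setminus\mathcal{V}(d,n)$ uses the same incidence-variety computation over ${\rm Gr}(k,n)$ giving codimension $(d-1)k(n-k)\geq 2$ for $(d,n)\neq(2,2)$, followed by the second Riemann extension theorem (the paper cites Reichstein for the dimension count and Gunning for the extension; you only need the upper bound on $\dim R_k$, not the exact value, and you are slightly more careful than the paper in verifying that the extended function is still a concomitant). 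Where you genuinely diverge is the $(2,2)$ case. The paper argues concretely: a Jordan-form computation shows $\mathcal{V}(2,2)=\{\det[Z_1,Z_2]\neq 0\}$, so $(\det[Z_1,Z_2])^{-1}$ is an explicit invariant (hence concomitant) on $\mathcal{V}(2,2)$ with no extension. You instead prove abstractly that the reducible locus is an irreducible hypersurface (via properness of the projection from the incidence variety and the generic uniqueness of the common eigenline), extract a reduced defining polynomial $h$, and upgrade semi-invariance to invariance using the perfectness of $SL(2,\mathbb{C})$; then $\mathcal{Z}_1/h$ serves as the non-extendable concomitant. Both arguments are sound. The paper's buys an explicit generator of the ideal of the reducible locus (in fact $h$ is $\det[Z_1,Z_2]$ up to a scalar), which is useful elsewhere; yours avoids the case analysis on Jordan forms and makes transparent why an \emph{invariant} defining equation must exist, which is the structural point underlying the counterexample.
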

Theorem \ref{thm: Cross sectional Algebra} gives a faithful representation
of $Hol(\mathcal{V}(d,n),M_{n}(\mathbb{C}))^{G}$ as a space of functions
on the space of similarity classes of its irreducible matrix representations.
It has the following immediate corollary.
\begin{cor}
\label{cor:Integral_domain} The bundle $\mathfrak{M}(d,n)$ is not
trivial when $(d,n)\neq(2,2)$.\end{cor}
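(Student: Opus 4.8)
\emph{Plan.} The argument plays two descriptions of the algebra of holomorphic cross sections against each other. Suppose, toward a contradiction, that $\mathfrak{M}(d,n)$ were trivial. A global trivialization would make $\Gamma_{h}(Q_{0}(d,n),\mathfrak{M}(d,n))$ isomorphic, as a $\mathbb{C}$-algebra, to $Hol(Q_{0}(d,n),M_{n}(\mathbb{C}))=M_{n}\bigl(Hol(Q_{0}(d,n))\bigr)$, and, since $n\geq2$, the constant cross sections given by the matrix units $E_{11}$ and $E_{22}$ are nonzero with $E_{11}E_{22}=0$; so this algebra has zero divisors. On the other hand, Theorem \ref{thm: Cross sectional Algebra} identifies $\Gamma_{h}(Q_{0}(d,n),\mathfrak{M}(d,n))$ with $Hol(\mathcal{V}(d,n),M_{n}(\mathbb{C}))^{G}$; and since $(d,n)\neq(2,2)$, the uniqueness of the extension asserted there shows that the extension map $Hol(\mathcal{V}(d,n),M_{n}(\mathbb{C}))^{G}\to Hol(M_{n}(\mathbb{C})^{d},M_{n}(\mathbb{C}))^{G}$ is an algebra isomorphism, its inverse being restriction to the Zariski-dense set $\mathcal{V}(d,n)$ (which is injective by the identity theorem). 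By the Lemma, the target is the compact-open closure $\overline{\mathbb{S}_{0}(d,n)}$ of the trace algebra. Hence the corollary is equivalent to the assertion that $Hol(M_{n}(\mathbb{C})^{d},M_{n}(\mathbb{C}))^{G}$ is an integral domain.

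To prove the latter, one first makes the routine reduction to a statement about determinants: it suffices to show that whenever $g$ is a nonzero holomorphic matrix concomitant on $M_{n}(\mathbb{C})^{d}$, the holomorphic scalar function $\mathfrak{z}\mapsto\det g(\mathfrak{z})$ is not identically zero. Indeed, granting this, $g$ is invertible on the dense open set $D=\{\det g\neq0\}$ (the complement of a proper analytic subset of the connected space $M_{n}(\mathbb{C})^{d}$), and any $f$ with $fg\equiv0$ must vanish on $D$, hence everywhere. For a nonzero element of the trace algebra $\mathbb{S}_{0}(d,n)$ itself this determinant statement is classical, because $\mathbb{S}_{0}(d,n)$ is a domain: it embeds in Amitsur's universal division algebra $UD(n,d)$, which is a genuine division ring for $d\geq2$, so a nonzero polynomial concomitant is invertible over the function field and therefore has nonvanishing determinant as a regular function.

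The main obstacle, and where I expect the real work to lie, is upgrading this from $\mathbb{S}_{0}(d,n)$ to its compact-open closure $\overline{\mathbb{S}_{0}(d,n)}$, since passing to a limit can a priori turn a generically invertible concomitant into a nowhere-invertible one. The route I would take is by contradiction: if $g\neq0$ but $\det g\equiv0$, let $r<n$ be the generic rank of $g$, fix an irreducible point $\mathfrak{z}_{0}\in\mathcal{V}(d,n)$ where this rank is attained, and note that near $\mathfrak{z}_{0}$ the kernels $\ker g(\mathfrak{z})$ form a holomorphic rank-$(n-r)$ subbundle which, by $g(s^{-1}\mathfrak{z}s)=s^{-1}g(\mathfrak{z})s$, is $G$-equivariant; via Theorem \ref{thm: Cross sectional Algebra} it descends to a holomorphic section of the associated Grassmann bundle $\mathcal{V}(d,n)\times_{G}\mathrm{Gr}(n-r,n)$ over $Q_{0}(d,n)$, i.e.\ to a reduction of the structure group of $\mathfrak{V}(d,n)$ to a proper parabolic of $G$. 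One then contradicts the fact that the generic fibre of $\mathfrak{M}(d,n)$ is the division algebra $UD(n,d)$, of index $n$: such a reduction would present $\mathfrak{M}(d,n)$, at the generic point, as the endomorphism algebra of a direct sum of vector bundles, forcing the index of its Brauer class below $n$. Reconciling this essentially algebraic input with the analytic (compact-open) closure is the crux; everything else is bookkeeping with the identifications set up in the Introduction. (This algebraic argument is insensitive to the extension property and in fact settles $(d,n)=(2,2)$ as well; we retain that hypothesis only because it is what lets us reduce the statement to $\overline{\mathbb{S}_{0}(d,n)}$, and hence to elementary considerations about generic invertibility, in the first place.)
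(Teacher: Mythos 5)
Your overall strategy is exactly the paper's: a global trivialization of $\mathfrak{M}(d,n)$ would put zero divisors into $\Gamma_{h}(Q_{0}(d,n),\mathfrak{M}(d,n))$, while Theorem \ref{thm: Cross sectional Algebra} together with the unique-extension statement (valid precisely because $(d,n)\neq(2,2)$) identifies that algebra with $Hol(M_{n}(\mathbb{C})^{d},M_{n}(\mathbb{C}))^{G}$, so everything comes down to the latter algebra having no zero divisors. The paper disposes of this last point by citation --- it is \cite[Proposition 4.4]{Lum97} --- whereas you undertake to prove it, and this is where your argument has a genuine gap. Your reduction to ``every nonzero holomorphic concomitant has somewhere-nonvanishing determinant'' is fine, and the polynomial case via the universal division algebra is fine, but you yourself flag the passage to general holomorphic concomitants as ``the crux,'' and the sketch you offer for it does not work. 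The kernel distribution of a holomorphic concomitant of generic rank $r<n$ is defined only on the complement of a proper \emph{analytic} subset and is itself only holomorphic, not algebraic; the resulting section of the Grassmann bundle therefore gives a reduction of the structure group over an analytically open set. That carries no information about the index of the Brauer class of the generic fibre, which is an invariant of the \emph{algebraic} generic point: over any small analytic open set the principal bundle is already trivial, so the Grassmann bundle has an abundance of local holomorphic sections, and no contradiction with the division-algebra structure of $UD(n,d)$ can be extracted this way. (The same defect undercuts your parenthetical claim to settle $(d,n)=(2,2)$ as well; note the authors explicitly state that the nontriviality of $\mathfrak{M}(2,2)$ is open.)

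The repair is simply to do what the paper does and invoke \cite[Proposition 4.4]{Lum97} for the absence of zero divisors in $Hol(M_{n}(\mathbb{C})^{d},M_{n}(\mathbb{C}))^{G}$; with that input the remainder of your argument is correct and coincides with the paper's proof.
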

\begin{proof}
By \cite[Proposition 4.4]{Lum97}, $Hol(M_{n}(\mathbb{C})^{d},M_{n}(\mathbb{C}))^{G}$
has no zero divisors. Since $Hol(M_{n}(\mathbb{C})^{d},M_{n}(\mathbb{C}))^{G}$
and $\Gamma_{h}(Q_{0}(d,n),\mathfrak{M}(d,n))$ are isomorphic when
$(d,n)\neq(2,2)$, neither does $\Gamma_{h}(Q_{0}(d,n),\mathfrak{M}(d,n))$
in this case. However, if $\mathfrak{M}(d,n)$ were trivial, $\Gamma_{h}(Q_{0}(d,n),\mathfrak{M}(d,n))$
would be isomorphic to the $n\times n$ matrices over the space of
holomorphic functions on $Q_{0}(d,n)$, which has plenty of zero divisors.
\end{proof}
Presumably, $\mathfrak{M}(2,2)$ is nontrivial, too, but we do not
know a proof.

Our focus then turns to domains $\mathcal{D}$ such that $\overline{\mathcal{D}}$
is a compact subset of $Q_{0}(d,n)$. Since $Q(d,n)$ is the spectrum
of $\mathbb{I}_{0}(d,n)$, the image of $\mathbb{I}_{0}(d,n)$ under
$\Psi$ coincides with the algebra of regular $\mathbb{C}$-valued
functions on $Q(d,n$). That is, if $\mathfrak{w}\in Q(d,n)$ and
if $\mathfrak{z}\in M_{n}(\mathbb{C})^{d}$ is such that $\pi_{0}(\mathfrak{z})=\mathfrak{w},$
then for $f\in\mathbb{I}_{0}(d,n)$, we get $\Psi(f)(\mathfrak{w})=f(\mathfrak{z})$,
identified with the cross section of $\mathfrak{M}(d,n)$ that $f$
determines. That is, $\Psi(f)([\mathfrak{z}])=[\mathfrak{z},f(\mathfrak{z})]$.
We let $\mathbb{I}(\mathcal{D};d,n)$ denote the closure of $\{\Psi(f)\mid f\in\mathbb{I}_{0}(d,n)\}$
in the space of continuous $\mathbb{C}$-valued functions on $\overline{\mathcal{D}}$,
$C(\overline{\mathcal{D}})$. Since $\mathbb{I}_{0}(d,n)$ contains
the constant functions and separates the points of $Q(d,n)$, $\mathbb{I}(\mathcal{D};d,n)$
is a function algebra on $\overline{\mathcal{D}}$, consisting of
functions that are continuous on $\overline{\mathcal{D}}$ and holomorphic
on $\mathcal{D}$. Although $\overline{\mathcal{D}}$ need not be
the maximal ideal space of $\mathbb{I}(\mathcal{D};d,n)$, $\overline{\mathcal{D}}$
contains the Shilov boundary of the maximal ideal space, which we
denote by $\partial\mathcal{D}$. (This is the case simply because
$\mathbb{I}(\mathcal{D};d,n)$ is a function algebra on $\overline{\mathcal{D}}$.)
The extreme boundary, or Choquet boundary of $\mathcal{D}$, will
be denoted $\partial_{e}\mathcal{D}$. It is a dense subset of $\partial\mathcal{D}$
that consists of all points in $\overline{\mathcal{D}}$ that have
unique representing measures for $\mathbb{I}(\mathcal{D};d,n)$ supported
in $\overline{\mathcal{D}}.$ 

We are interested both in the holomorphic cross sections of $\mathfrak{M}(d,n)$
and in its continuous cross sections, $\Gamma_{c}(Q_{0}(d,n),\mathfrak{M}(d,n))$.
The problem we face is that there is no \emph{evident} natural involution
on $\mathfrak{M}(d,n)$ with respect to which $\Gamma_{c}(X,\mathfrak{M}(d,n))$
is a $C^{*}$-algebra for every compact subset $X\subseteq Q_{0}(d,n)$.
This is because $\mathcal{V}(d,n)$ is a principal $G$-bundle and
so in a coordinate representation of $\mathcal{V}(d,n)$ the transition
functions need not take their values in $K$. In fact, $\Gamma_{c}(X,\mathfrak{M}(d,n))$
does not carry a \emph{canonical} Banach algebra structure. Nevertheless,
there are many \emph{ad hoc} Banach algebra structures on $\Gamma_{c}(X,\mathfrak{M}(d,n))$,
which may be constructed as follows. Take a locally finite open cover
$\mathcal{U}$ of $Q_{0}(d,n)$ with an associated set of transition
functions $\{g_{UV}\}_{U,V\in\mathcal{U}}$ that define $\mathcal{V}(d,n)$
as a principal bundle. Then take isomorphisms $F_{U}:\mathfrak{M}(d,n)\vert_{U}\to U\times M_{n}(\mathbb{C})$
that allow one to identify continuous cross sections of $\mathfrak{M}(d,n)$
over $U$ with continuous $M_{n}(\mathbb{C})$-valued functions $f_{U}$
on $U$ that satisfy $f_{U}(\mathfrak{u})=g_{UV}(\mathfrak{u})\circ f_{V}(\mathfrak{u})$
on $U\cap V$. For a given compact subset $X\subseteq Q_{0}(d,n)$
one can then define a Banach algebra norm on $\Gamma_{c}(X,\mathfrak{M}(d,n))$
by setting
\begin{equation}
\Vert\sigma\Vert_{\mathcal{U}}:=\sup_{x\in X}\sup_{x\in U}\Vert F_{U}(\sigma)(x)\Vert,\qquad\sigma\in\Gamma_{c}(X,\mathfrak{M}(d,n)).\label{eq:Ad_hoc_BA-structure}
\end{equation}
Here the norm $\Vert F_{U}(\sigma)(x)\Vert$ refers to the Hilbert
space operator norm one obtains by viewing $M_{n}(\mathbb{C})$ as
operators on $\mathbb{C}^{n}$ in the usual way. Different systems
of data $(\mathcal{U},\{g_{UV}\}_{U,V\in\mathcal{U}},\{F_{U}\}_{U\in\mathcal{U}})$
give different norms, but the norms are all equivalent, i.e., the
Banach algebras constructed are mutually isomorphic, and they all
yield the compact-open topology on $\Gamma_{c}(X,\mathfrak{M}(d,n))$
for any compact set $X\subseteq Q_{0}(d,n)$. 

It may come as a pleasant surprise, therefore, to learn that there
\emph{is} a way to put a $C^{*}$-algebra strcture on $\Gamma_{c}(X,\mathfrak{M}(d,n))$
for each compact set $X\subseteq Q_{0}(d,n)$. In fact, any two $C^{*}$-algebra
structures on $\Gamma_{c}(X,\mathfrak{M}(d,n))$ are $*$-isomorphic.
We must emphasize the difference between `isomorphic' and `equal'
here because the isomorphisms involved almost always map some holomorphic
sections to non-holomorphic sections. Each $C^{*}$-structure on $\Gamma_{c}(X,\mathfrak{M}(d,n))$
is obtained from a \emph{reduction} $\mathfrak{P}$ of $\mathcal{V}(d,n)$
to a principal $K$-bundle over $X$\footnote{We follow Steenrod \cite{Steenrod1951} in the use of the term ``reduction''.
Husemoller uses the term ``restriction''.}. For our purposes, this means that $\mathfrak{P}$ is a principal
$K$ bundle obtained from a $K$-invariant compact subset $\mathcal{P}$
of $\mathcal{V}(d,n)$ that $\pi$ maps onto $X$. That is, $\pi$
identifies $X$ with $\mathcal{P}/K$. From a coordinate point of
view, the transition functions defining $\mathfrak{P}$ take their
values in $K$ and so the associated $M_{n}(\mathbb{C})$-fibre bundle,
which we denote by $\mathfrak{M}^{*}(\mathfrak{P};d,n)$, has a natural,
fibre-wise-defined involution. The bundles $\mathfrak{M}(d,n)$ and
$\mathfrak{M}^{*}(\mathfrak{P};d,n)$ are isomorphic as topological
bundles \cite[Theorem 6.3.1]{Husemoller1994}. Therefore for any compact
subset $X$ of $Q_{0}(d,n)$, $\Gamma_{c}(X,\mathfrak{M}^{*}(\mathfrak{P};d,n))$
and $\Gamma_{c}(X,\mathfrak{M}(d,n))$ are isomorphic Banach algebras,
where $\Gamma_{c}(X,\mathfrak{M}(d,n))$ is given any of the norms
$\Vert\cdot\Vert_{\mathcal{U}}$ defined in \eqref{eq:Ad_hoc_BA-structure}
using a choice of the data $(\mathcal{U},\{g_{UV}\}_{U,V\in\mathcal{U}},\{F_{U}\}_{U\in\mathcal{U}})$. 

In the norm on $\GcDM$, elements in $\GhDM$ achieve their maximums
on $\partial D$. However, it is easy to construct examples of reductions
$\mathfrak{P}$ of $\mathcal{V}(d,n)$ such that the image of an element
from $\GhDM$ in $\Gamma_{c}(\overline{\mathcal{D}},\mathfrak{M}^{*}(\mathfrak{P};d,n))$
need not take its maximum norm on $\partial\mathcal{D}$. For this
reason, we adjust our focus and concentrate \emph{directly} on $\Gamma_{c}(\partial\mathcal{D},\mathfrak{M}^{*}(\mathfrak{P};d,n))$.
\begin{defn}
\label{def:NC-fcn-algebra} The closure of $\Psi(\mathbb{S}_{0}(d,n))$
in $\Gamma_{c}(\partial\mathcal{D},\mathfrak{M}^{*}(\mathfrak{P};d,n))$
will be denoted $\mathbb{S}(\mathcal{D},\mathfrak{P};d,n)$ and will
be called the \emph{tracial function algebra} of $\mathcal{D}$ determined
by $\mathfrak{P}$ and $\mathbb{S}_{0}(d,n)$. 
\end{defn}
Observe that when $n=1$, $G=K$ is the trivial group; $\mathcal{V}(d,n)$,
$\mathcal{P}$, and $Q_{0}(d,n)$ become identified with $\mathbb{C}^{d}\backslash\{0\}$;
$\mathfrak{M}(d,n)=\mathfrak{M}^{*}(\mathfrak{P};d,n)$ is the trivial
line bundle on $\mathbb{C}^{d}$; and the algebras $\mathbb{I}(\mathcal{D};d,n)$
and $\mathbb{S}(\mathcal{D},\mathfrak{P};d,n)$ are identified with
$\mbox{\ensuremath{\mathcal{P}}(\ensuremath{\overline{\mathcal{D}}})}$,
the $\sup$-norm closure of the polynomial functions on $\mathbb{C}^{d}$
in the continuous functions on $\overline{\mathcal{D}}$. Of course,
$\mathcal{P}(\overline{\mathcal{D}})$ is a much studied algebra in
complex analysis (see, e.g. \cite{Stout_2007}), but there does not
seem to be a universally accepted term for it. Our current thinking
is that $\mathbb{S}(\mathcal{D},\mathfrak{P};d,n)$ is the natural
generalization of $\mathcal{P}(\overline{\mathcal{D}})$.

We note that the center of $\mathbb{S}(\mathcal{D},\mathfrak{P};d,n)$
may be identified in a natural fashion with $\mathbb{I}(\mathcal{D};d,n)$,
no matter what reduction is chosen. We shall give a proof of this
fact in Section \ref{sec:Function-Theory-in-Bundles}. The reason
the assertion is true is that elements of $\mathbb{I}(\mathcal{D};d,n)$
are identified with sections whose values are scalar multiples of
the identity and these are unaffected by the transition functions
that describe the bundles. The fact that the center of $\mathbb{S}(\mathcal{D},\mathfrak{P};d,n)$
is $\mathbb{I}(\mathcal{D};d,n)$ shows in particular that $\mathbb{S}(\mathcal{D},\mathfrak{P};d,n)$
is a \emph{proper} subalgebra of $\Gamma_{c}(\partial\mathcal{D},\mathfrak{M}^{*}(\mathfrak{P};d,n))$.
This is not evident, \emph{a priori}. 

The $C^{*}$-algebra $\Gamma_{c}(\partial\mathcal{D},\mathfrak{M}^{*}(\mathfrak{P};d,n))$
is an $n$-homogeneous $C^{*}$-algebra \cite[Theorem 8]{Tomiyama1961}
and each irreducible representation of it is given, essentially, by
evaluation at a unique point of $\partial\mathcal{D}$. In more detail,
note that for $\mathfrak{u}\in Q_{0}(d,n)$, $\pi^{-1}(\mathfrak{u})=\{[\mathfrak{z},A]\in\mathcal{P}\times_{K}M_{n}(\mathbb{C})\mid\pi_{0}(\mathfrak{z})=\mathfrak{u},\,A\in M_{n}(\mathbb{C})\}$.
So, once $\mathfrak{z}$ is chosen so that $\pi_{0}(\mathfrak{z})=\mathfrak{u}$
the map $A\to[\mathfrak{\mathfrak{z}},A]$ is a unital $*$-homomorphism
$\rho$ of $M_{n}(\mathbb{C})$ into $\pi^{-1}(\mathfrak{u})$. Since
$M_{n}(\mathbb{C})$ is simple, the map is injective. It is surjective
because if $[\mathfrak{w},B]$ lies in $\pi^{-1}(\mathfrak{u})$,
then there is a unique $s\in K$ such that $\mathfrak{w}=\mathfrak{z}\cdot s$
and we may write: $[\mathfrak{w},B]=[\mathfrak{z}\cdot s,B]=[\mathfrak{z},s\cdot B]$,
which is in the image of $\rho$. Thus, if for each $\mathfrak{u}\in\partial\mathcal{D}$,
we write $ev_{\mathfrak{u}}$ for the $*$-homomorphism from $\Gamma_{c}(\partial\mathcal{D},\mathfrak{M}^{*}(\mathfrak{P};d,n))$
into $\pi^{-1}(\mathfrak{u})$ defined by evaluating a section in
$\Gamma_{c}(\partial\mathcal{D},\mathfrak{M}^{*}(\mathfrak{P};d,n))$
at $\mathfrak{u}$, then $\rho^{-1}\circ ev_{\mathfrak{u}}$ is an
irreducible representation of $\Gamma_{c}(\partial\mathcal{D},\mathfrak{M}^{*}(\mathfrak{P};d,n))$
and every irreducible representation of $\Gamma_{c}(\partial\mathcal{D},\mathfrak{M}^{*}(\mathfrak{P};d,n))$
is unitarily equivalent to $\rho^{-1}\circ ev_{\mathfrak{u}}$ for
a unique $\mathfrak{u}\in\partial\mathcal{D}$ by \cite[Corollary 10.4.4]{Dixmier}. 

The two principal theorems of this note are Theorems \ref{thm:Boundary theorem}
and \ref{thm:Azumaya}, below. For the first, and its corollary, Corollary
\ref{cor:Shilov_boundary_ideal}, we need to recall Arveson's definition
of a boundary representation, and related ideas.
\begin{defn}
\label{def:_boundary_rep} \cite[Definition 2.1.1]{Arv1969a} If $B$
is a unital $C^{*}$-algebra and if $A$ is a norm-closed subalgebra
of $B$ that contains the unit of $B$ and generates $B$ as a $C^{*}$-algebra,
then an irreducible representation $\pi:B\to B(H_{\pi})$ is a \emph{boundary
representation} for $A$ in case $\pi$ is the only unital completely
positive map $\omega:B\to B(H_{\pi})$ such that $\pi\vert_{A}=\omega\vert_{A}$. \end{defn}
\begin{thm}
\label{thm:Boundary theorem} If $\mathfrak{u}\in\partial_{e}\mathcal{D}$,
then $ev_{\mathfrak{u}}$ is a boundary representation of $\Gamma_{c}(\partial\mathcal{D},\mathfrak{M}^{*}(\mathfrak{P};d,n))$
for $\mathbb{S}(\mathcal{D},\mathfrak{P};d,n)$. 
\end{thm}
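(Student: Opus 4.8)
The plan is to pin down the structure of unital completely positive (UCP) maps $\omega$ on $B := \Gamma_c(\partial\mathcal{D},\mathfrak{M}^*(\mathfrak{P};d,n))$ that agree with $ev_{\mathfrak{u}}$ on $\mathbb{S}(\mathcal{D},\mathfrak{P};d,n)$, and show the only such map is $ev_{\mathfrak{u}}$ itself. Since $B$ is $n$-homogeneous, its center is $C(\partial\mathcal{D})$ (the identification made earlier via $\mathbb{I}(\mathcal{D};d,n)$ once we pass to the boundary), and $B \cong \Gamma_c(\partial\mathcal{D},\mathfrak{M}^*(\mathfrak{P};d,n))$ is an algebra of sections of an $M_n(\mathbb{C})$-bundle over $\partial\mathcal{D}$. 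The algebra $\mathbb{S}(\mathcal{D},\mathfrak{P};d,n)$ contains $\mathbb{I}(\mathcal{D};d,n)$, whose closure is all of $C(\partial\mathcal{D})$ by Stone–Weierstrass (it separates points of $\partial\mathcal{D} \subseteq Q(d,n)$ and contains the constants). Hence $\omega$ agrees with $ev_{\mathfrak{u}}$ on a copy of $C(\partial\mathcal{D}) = Z(B)$.

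First I would exploit the multiplicative-domain / bimodule trick: if a UCP map $\omega$ restricted to a commutative $C^*$-subalgebra $C \subseteq A \cap B$ is multiplicative (here it is, since $ev_{\mathfrak{u}}|_C$ is a character-bundle evaluation and in particular a $*$-homomorphism into $M_n(\mathbb{C})$), then $C$ lies in the multiplicative domain of $\omega$, so $\omega(c x) = \omega(c)\omega(x)$ and $\omega(xc)=\omega(x)\omega(c)$ for all $x \in B$, $c \in C$. Applying this with $C = C(\partial\mathcal{D})$ and using $\omega(c) = ev_{\mathfrak{u}}(c) = c(\mathfrak{u})I_n$, one concludes that $\omega$ factors through the fiber of $B$ at $\mathfrak{u}$: for any $c \in C(\partial\mathcal{D})$ vanishing at $\mathfrak{u}$ and any $x$, $\omega(cx) = c(\mathfrak{u})\omega(x) = 0$, so $\omega$ annihilates the ideal $J_{\mathfrak{u}} = \{x : x(\mathfrak{u})=0\}$ and descends to a UCP map $\bar\omega$ on $B/J_{\mathfrak{u}} \cong M_n(\mathbb{C})$. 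A UCP map out of $M_n(\mathbb{C})$ that is unital is automatically a $*$-homomorphism iff it is faithful; in any case $\bar\omega$ must agree with $\bar{ev_{\mathfrak{u}}} = \rho^{-1}$ (the fiber identification) on the image of $\mathbb{S}(\mathcal{D},\mathfrak{P};d,n)$ in $M_n(\mathbb{C})$.

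Second, I would identify that image: the image of $\mathbb{S}_0(d,n)$ under $\Psi$, evaluated in the fiber over $\mathfrak{u}$, is — choosing $\mathfrak{z}\in\mathcal{V}(d,n)$ with $\pi_0(\mathfrak{z})=\mathfrak{u}$ — the algebra generated by $Z_1,\dots,Z_d$ together with scalar multiples of the identity, i.e. (after conjugation) all of $M_n(\mathbb{C})$, since $\mathfrak{u}\in Q_0(d,n)$ means precisely that $\mathfrak{z}$ is an irreducible point, so $Z_1,\dots,Z_d$ generate $M_n(\mathbb{C})$. Thus the image of $\mathbb{S}(\mathcal{D},\mathfrak{P};d,n)$ in the fiber $M_n(\mathbb{C})$ over any $\mathfrak{u}\in\overline{\mathcal{D}} \subseteq Q_0(d,n)$ is \emph{all} of $M_n(\mathbb{C})$. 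Therefore $\bar\omega$ agrees with $\rho^{-1}$ on all of $M_n(\mathbb{C})$, whence $\bar\omega = \rho^{-1}$ and $\omega = \rho^{-1}\circ ev_{\mathfrak{u}} = ev_{\mathfrak{u}}$.

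At this point one notices that the argument so far has not used $\mathfrak{u}\in\partial_e\mathcal{D}$ at all — it would seem to show \emph{every} $ev_{\mathfrak{u}}$ is a boundary representation, which cannot be right. The resolution, and the real content of the theorem, is that for $ev_{\mathfrak{u}}$ to be a boundary representation one must compare with UCP maps $\omega : B \to B(H_{\mathfrak{u}})$ that agree with $ev_{\mathfrak{u}}$ on $\mathbb{S}(\mathcal{D},\mathfrak{P};d,n)$ but are \emph{not} assumed to be multiplicative on the center; the center is $\mathbb{I}(\mathcal{D};d,n)$, and $ev_{\mathfrak{u}}$ restricted to $\mathbb{I}(\mathcal{D};d,n)$ is exactly the evaluation-at-$\mathfrak{u}$ functional on the function algebra $\mathbb{I}(\mathcal{D};d,n)$ (tensored with $I_n$). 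So the missing input is: a UCP map $\omega$ agreeing with $ev_{\mathfrak{u}}$ on $\mathbb{S}(\mathcal{D},\mathfrak{P};d,n)$, restricted to $\mathbb{I}(\mathcal{D};d,n) \subseteq \mathbb{S}(\mathcal{D},\mathfrak{P};d,n)$, yields (via compression to a unit vector, or via the Arveson dilation) a representing measure for $\mathfrak{u}$ with respect to $\mathbb{I}(\mathcal{D};d,n)$ supported on $\partial\mathcal{D}\subseteq\overline{\mathcal{D}}$; the hypothesis $\mathfrak{u}\in\partial_e\mathcal{D}$ says this representing measure is \emph{unique}, namely the point mass $\delta_{\mathfrak{u}}$, which then forces (again by a multiplicative-domain argument, now legitimately applicable because $\omega|_{\mathbb{I}(\mathcal{D};d,n)}$ is the pure state $\delta_{\mathfrak{u}}$ extended by $I$, hence multiplicative) that $\omega$ annihilates $J_{\mathfrak{u}}$ and descends to $M_n(\mathbb{C})$, at which stage the fiber argument above applies verbatim. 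I expect the main obstacle to be this last bridge — turning "unique representing measure" into "$\omega$ is multiplicative on the center": concretely, one takes the minimal Stinespring dilation $(\pi,V)$ of $\omega$, observes $\pi$ is a representation of the homogeneous $C^*$-algebra $B$ hence a direct integral of the $ev_{\mathfrak{w}}$'s against a measure $\mu$ on $\partial\mathcal{D}$, and the compression of the central part back through $\mathbb{I}(\mathcal{D};d,n)$ gives that $\mu$ pushes forward to a representing measure for $\mathfrak{u}$; Choquet-uniqueness pins $\mu = \delta_{\mathfrak{u}}$, so $\pi = ev_{\mathfrak{u}}\otimes \mathrm{id}$ and minimality forces $\omega = ev_{\mathfrak{u}}$.
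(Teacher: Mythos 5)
Your final argument (the last paragraph) is correct and reaches the theorem by a genuinely different route than the paper. The paper's proof is much shorter but leans on heavier machinery: since $\overline{\mathcal{D}}$ is metrizable, a Choquet boundary point of the function algebra $\mathbb{I}(\mathcal{D};d,n)$ is a peak point (Stout), so $ev_{\mathfrak{u}}$ is a \emph{peaking representation} of $\Gamma_{c}(\partial\mathcal{D},\mathfrak{M}^{*}(\mathfrak{P};d,n))$ for $\mathbb{S}(\mathcal{D},\mathfrak{P};d,n)$ (peaking at the $1\times1$ matrix given by the peaking function), and Kleski's theorem --- which itself rests on the Arveson/Davidson--Kennedy existence results for boundary representations --- says peaking representations are boundary representations. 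Your route is a direct verification from the definition: Stinespring-dilate a competing UCP map $\omega$, restrict the dilation to the center to manufacture representing measures for $\mathfrak{u}$ on $\partial\mathcal{D}$, invoke uniqueness of the representing measure at a Choquet point to force $\omega$ to agree with the character $\delta_{\mathfrak{u}}$ on all of the center, use Choi's multiplicative domain to conclude $\omega$ kills the fiber ideal $J_{\mathfrak{u}}$ (which is the closed span of $\{c\,x : c(\mathfrak{u})=0\}$, by the usual partition-of-unity approximation in a section algebra), and finish with the observation that $\mathbb{S}(\mathcal{D},\mathfrak{P};d,n)$ maps \emph{onto} the fiber $M_{n}(\mathbb{C})$ because points of $\mathcal{V}(d,n)$ are irreducible $d$-tuples. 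This is more self-contained (no Kleski, no Davidson--Kennedy, no metrizability/peak-point step), it exposes exactly where each hypothesis enters, and it uses the fiber-surjectivity of $\mathbb{S}$ in an essential way that the paper's argument never needs; the price is a longer chain of standard dilation-theoretic steps.

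One caveat about your write-up rather than your proof: the opening two paragraphs are built on the claim that the closure of $\mathbb{I}(\mathcal{D};d,n)$ in $C(\partial\mathcal{D})$ is all of $C(\partial\mathcal{D})$ ``by Stone--Weierstrass.'' That is false --- $\mathbb{I}(\mathcal{D};d,n)$ is a non-self-adjoint uniform algebra (think of the disc algebra restricted to the circle), so Stone--Weierstrass does not apply, and $\omega$ is \emph{not} automatically determined on the center. You notice this yourself when you observe that the argument would prove every $ev_{\mathfrak{u}}$ is a boundary representation, and your repaired argument correctly derives multiplicativity on the center from the unique-representing-measure property instead. In a final write-up the first two paragraphs should simply be deleted; only the last paragraph (together with the fiber-surjectivity observation from the second) constitutes the proof.
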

In the setting of Definition \ref{def:_boundary_rep}, an ideal $\mathfrak{I}$
in $B$ is called a \emph{boundary ideal} in case the restriction
to $A$ of the quotient map $q:B\to B/\mathfrak{I}$ is completely
isometric. The intersection of the kernels of the boundary representations
of $B$ for $A$ is the largest boundary ideal, which is called the
\emph{Shilov boundary ideal} of $B$ for $A$\footnote{When \cite{Arv1969a} was written, it was not known if boundary representations
always exist and the Shilov boundary ideal was defined differently;
the existence of the Shilov boundary ideal was problematic. Today,
thanks to \cite{Arv2008a} and \cite{Davidson_and_Kennedy_2013},
it is known that in every setting there are sufficiently many boundary
representations to determine the Shilov boundary ideal.}. The quotient of $B$ by the Shilov boundary ideal is unique up to
$C^{*}$-isomorphism in a very strong sense \cite[Theorem 2.2.6]{Arv1969a}.
The quotient is called the\emph{ $C^{*}$-envelope of }$A$. 
\begin{cor}
\label{cor:Shilov_boundary_ideal}For each reduction $\mathfrak{P}$
of $\mathcal{V}(d,n)$ and for each domain $\mathcal{D}$ with $\overline{\mathcal{D}}$
contained in $Q_{0}(d,n)$, the Shilov boundary ideal of $\Gamma_{c}(\partial\mathcal{D},\mathfrak{M}^{*}(\mathfrak{P};d,n))$
for $\mathbb{S}(\mathcal{D},\mathfrak{P};d,n)$ vanishes, so $\Gamma_{c}(\partial\mathcal{D},\mathfrak{M}^{*}(\mathfrak{P};d,n))$
is the $C^{*}$-envelope of $\mathbb{S}(\mathcal{D},\mathfrak{P};d,n)$.
\end{cor}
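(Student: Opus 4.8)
The plan is to deduce Corollary \ref{cor:Shilov_boundary_ideal} from Theorem \ref{thm:Boundary theorem} together with Arveson's characterization of the Shilov boundary ideal. Recall that $B := \Gamma_{c}(\partial\mathcal{D},\mathfrak{M}^{*}(\mathfrak{P};d,n))$ is $n$-homogeneous, and its irreducible representations are exactly the evaluations $ev_{\mathfrak{u}}$ (up to unitary equivalence), $\mathfrak{u}\in\partial\mathcal{D}$, each with kernel the maximal ideal $\mathfrak{I}_{\mathfrak{u}}$ of sections vanishing at $\mathfrak{u}$. By Theorem \ref{thm:Boundary theorem}, every $ev_{\mathfrak{u}}$ with $\mathfrak{u}\in\partial_{e}\mathcal{D}$ is a boundary representation of $B$ for $A := \mathbb{S}(\mathcal{D},\mathfrak{P};d,n)$. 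Hence the Shilov boundary ideal, being the intersection of the kernels of \emph{all} boundary representations, is contained in $\bigcap_{\mathfrak{u}\in\partial_{e}\mathcal{D}}\mathfrak{I}_{\mathfrak{u}}$. So the first step is to show that this intersection is $\{0\}$.

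For that step I would argue as follows. Suppose $\sigma\in B$ lies in $\mathfrak{I}_{\mathfrak{u}}$ for every $\mathfrak{u}\in\partial_{e}\mathcal{D}$, i.e. $\sigma(\mathfrak{u})=0$ for all such $\mathfrak{u}$. Since $\partial_{e}\mathcal{D}$ is dense in $\partial\mathcal{D}$ (stated in the excerpt) and $\sigma$ is a continuous section over $\partial\mathcal{D}$, the section $\sigma$ vanishes identically on $\partial\mathcal{D}$; but $\partial\mathcal{D}$ is the whole underlying space of $B$, so $\sigma=0$. Therefore $\bigcap_{\mathfrak{u}\in\partial_{e}\mathcal{D}}\mathfrak{I}_{\mathfrak{u}}=\{0\}$, and a fortiori the Shilov boundary ideal of $B$ for $A$ is $\{0\}$.

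Finally, having shown the Shilov boundary ideal vanishes, the quotient of $B$ by it is $B$ itself, and by Arveson's theorem \cite[Theorem 2.2.6]{Arv1969a} this quotient \emph{is} the $C^{*}$-envelope of $A$; so $\Gamma_{c}(\partial\mathcal{D},\mathfrak{M}^{*}(\mathfrak{P};d,n))$ is the $C^{*}$-envelope of $\mathbb{S}(\mathcal{D},\mathfrak{P};d,n)$, as claimed. One should also note at the outset that $A$ does generate $B$ as a $C^{*}$-algebra and contains its unit — this is needed to invoke Arveson's framework; it follows because $\Psi(\mathbb{S}_{0}(d,n))$ contains the constant section $I_{n}$ and because $\Psi(\mathbb{I}_{0}(d,n))$ separates points of $Q(d,n)\supseteq\partial\mathcal{D}$ while $\Psi(\mathbb{G}_{0}(d,n))$ generates each fibre $M_{n}(\mathbb{C})$ (indeed $\mathbb{S}_{0}(d,n)$ is the algebra of all polynomial concomitants), so the generated $C^{*}$-algebra has the same irreducible representations as $B$ and hence equals $B$.

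The only real subtlety — and the step I expect to require the most care — is the verification that $A$ generates $B$ as a $C^{*}$-algebra, since one must check that evaluation of $\Psi(\mathbb{S}_{0}(d,n))$ at a point $\mathfrak{u}\in\partial\mathcal{D}$ fills out the entire fibre $M_{n}(\mathbb{C})$ (not merely a proper subalgebra); this rests on the fact that the underlying $d$-tuple $\mathfrak{z}$ with $\pi_{0}(\mathfrak{z})=\mathfrak{u}$ is an irreducible point, so the generic matrices $\mathcal{Z}_{i}$ already generate $M_{n}(\mathbb{C})$ at $\mathfrak{u}$, and the $*$-operation on the fibre is then recovered via the $C^{*}$-algebra generated. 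Everything else is a short density argument plus a citation, so the corollary follows quickly once Theorem \ref{thm:Boundary theorem} is in hand.
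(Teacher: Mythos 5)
Your proposal is correct and follows essentially the same route as the paper: invoke Theorem \ref{thm:Boundary theorem} to see that the evaluations at points of $\partial_{e}\mathcal{D}$ are boundary representations, note that the Shilov boundary ideal is contained in the intersection of their kernels, and use density of $\partial_{e}\mathcal{D}$ in $\partial\mathcal{D}$ together with continuity of sections to conclude that this intersection is zero. The extra remarks you add about verifying that $\mathbb{S}(\mathcal{D},\mathfrak{P};d,n)$ generates $\Gamma_{c}(\partial\mathcal{D},\mathfrak{M}^{*}(\mathfrak{P};d,n))$ are a reasonable bit of due diligence but do not change the substance of the argument.
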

{}

The pair $(Q(d,n),\mathbb{I}_{0}(d,n))$ is an example of what Rickart
calls a natural function algebra \cite{Rickart_1979}, where $Q(d,n)$
is considered with its analytic topology. If $X\subseteq Q(d,n)$
is a compact subset, then the \emph{$\mathbb{I}_{0}(d,n)$-convex
hull of }$X$, $\widehat{X}$, is defined to be $\{\mathfrak{z}\in Q(d,n)\mid\vert f(\mathfrak{z})\vert\leq\Vert f\Vert_{X},\,f\in\mathbb{I}_{0}(d,n)\}$,
where $\Vert f\Vert_{X}:=\sup_{\mathfrak{z}\in X}\vert f(\mathfrak{z})\vert$.
If $X=\widehat{X}$, then $X$ is called\emph{ $\mathbb{I}_{0}(d,n)$-convex}.
The maximal ideal space of the closure of $\mathbb{I}_{0}(d,n)$ in
$C(X)$ is $\widehat{X}$.

We note in passing that when $d=n=2$, $\mathbb{I}_{0}(d,n)$ is isomorphic
to the polynomial algebra in five variables; so $Q(2,2)$ may be identified
with $\mathbb{C}^{5}$ (see, e.g., \cite[P. 14 ff]{LeBruyn2008}).
Thus, in this case, the $\mathbb{I}_{0}(2,2)$-convex hull of a compact
set $X$ coincides with its polynomially convex hull. In general,
however, $\mathbb{I}_{0}(d,n)$ is more complicated and still largely
mysterious. It is worth noting that when $d=n=2$, the identification
of $Q(2,2)$ with $\mathbb{C}^{5}$ is through the map
\[
(Z_{1},Z_{2})\to({\rm tr}(Z_{1}),{\rm tr}(Z_{2}),\det(Z_{1}),\det(Z_{2}),{\rm tr}(Z_{1}Z_{2})).
\]

So even in this setting, the interaction of the map with the norms
involved is unclear. The situation is further complicated by the fact
that generators of $\mathbb{I}_{0}(2,2)$ are not uniquely determined
and it is not at all clear which ones are best for, or even well adapted
to, analysis.
\begin{defn}
A unital algebra $\mathfrak{A}$ with center $\mathfrak{Z}$ is called
an \emph{Azumaya algebra} in case
\begin{enumerate}
\item As a right module over $\mathfrak{Z}$, $\mathfrak{A}$ is projective,
and
\item The map from $\mathfrak{A}\otimes_{\mathfrak{Z}}\mathfrak{A}^{op}$
to $End(\mathfrak{A}_{\mathfrak{Z}})$ defined by identifying $a\otimes b$
with the endomorphism
\[
a\otimes b(c):=acb,\qquad c\in\mathfrak{A},
\]
is an isomorphism.
\end{enumerate}
\end{defn}
This is one of many equivalent definitions. For further background
on such algebras, see \cite{DeMeyer-Ingraham_1971}. The importance
of these algebras for us is that they are algebraic versions of $n$-homogeneous
$C^{*}$-algebras by \cite[Theorem 8.3]{Artin1969}. Specifically,
Artin proved in his Theorem 8.3 (specialized to algebras over $\mathbb{C}$)
that if $A$ is a unital $\mathbb{C}$-algebra, then $A$ is an Azumaya
algebra of rank $n^{2}$ over its center if and only if $A$ satisfies
the identities of the $n\times n$ matrices and $A$ has no (unital)
representations in $M_{r}(\mathbb{C})$ for $r\lneq n$. (To say in
this setting that $A$ has rank $n^{2}$ over its center means that
for each maximal $2$-sided ideal $\mathfrak{m}$ of $A$, $A/\mathfrak{m}\simeq M_{n}(\mathbb{C})$.)
Equivalently, under the hypothesis that $A$ satisfies the identities
of the $n\times n$ matrices, the theorem asserts that $A$ is an
Azumaya algebra if and only if each (algebraically) irreducible representation
of $A$ is $n$-dimensional. Artin was inspired, in part, by Tomiyama
and Takesaki's representation of an $n$-homogeneous $C^{*}$-algebra
as the continuous cross sections of a matrix bundle in \cite{Tomiyama1961}.
Thus, in one sense, the following theorem may easily be anticipated,
given that the algebra in question is a subalgebra of an $n$ homogeneous
$C^{*}$-algebra. However, the proof may not seem immediate. Further,
the theorem has consequences that appear difficult to establish without
it, e.g., Corollary \ref{cor:ideals}.
\begin{thm}
\label{thm:Azumaya} If $\overline{\mathcal{D}}$ is $\mathbb{I}_{0}(d,n)$-convex,
then the algebra $\mathbb{S}(\mathcal{D},\mathfrak{P};d,n)$ is a
rank $n^{2}$ Azumaya algebra over $\mathbb{I}(\mathcal{D};d,n)$.\end{thm}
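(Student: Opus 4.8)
The plan is to deduce the theorem from Artin's criterion \cite[Theorem 8.3]{Artin1969}, quoted above. Write $\mathbb{S}:=\mathbb{S}(\mathcal{D},\mathfrak{P};d,n)$. Since the center of $\mathbb{S}$ has already been identified with $\mathbb{I}(\mathcal{D};d,n)$ (see Section \ref{sec:Function-Theory-in-Bundles}), it is enough to verify the two hypotheses of Artin's theorem: \emph{(i)} $\mathbb{S}$ satisfies the polynomial identities of $M_{n}(\mathbb{C})$, and \emph{(ii)} $\mathbb{S}$ has no unital representation in $M_{r}(\mathbb{C})$ for any $r<n$. Granting \emph{(i)} and \emph{(ii)}, Artin's theorem asserts precisely that $\mathbb{S}$ is a rank $n^{2}$ Azumaya algebra over its center $\mathbb{I}(\mathcal{D};d,n)$, which is the assertion.

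Condition \emph{(i)} is immediate: $\mathbb{S}$ is a closed subalgebra of $\Gamma_{c}(\partial\mathcal{D},\mathfrak{M}^{*}(\mathfrak{P};d,n))$, which is $n$-homogeneous by \cite[Theorem 8]{Tomiyama1961} and hence satisfies the identities of $M_{n}(\mathbb{C})$; equivalently, $\Psi(\mathbb{S}_{0}(d,n))$ consists of $M_{n}(\mathbb{C})$-valued functions satisfying these (polynomial) identities pointwise, and the identities persist in the closure. We remark that $\mathbb{S}$ does not satisfy the identities of $M_{n-1}(\mathbb{C})$: for $\mathfrak{u}\in\partial\mathcal{D}$, $ev_{\mathfrak{u}}$ restricted to $\mathbb{S}$ is already onto $M_{n}(\mathbb{C})$, because a preimage $\mathfrak{z}=(Z_{1},\dots,Z_{d})\in\mathcal{V}(d,n)$ of $\mathfrak{u}$ has $Z_{1},\dots,Z_{d}$ generating $M_{n}(\mathbb{C})$, and these are the values at $\mathfrak{u}$ of $\Psi(\mathcal{Z}_{1}),\dots,\Psi(\mathcal{Z}_{d})\in\mathbb{S}$. (This sharpness also follows from \emph{(ii)} and is not logically needed.)

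For \emph{(ii)}, suppose $\phi:\mathbb{S}\to M_{r}(\mathbb{C})$ is a unital homomorphism with $r<n$. Restricting the action to a simple $\mathbb{S}$-submodule of $\mathbb{C}^{r}$, we may assume $\phi$ is irreducible, so that $\phi(\mathbb{S})=M_{r}(\mathbb{C})$ by the Jacobson density theorem. Then $\phi$ carries the center $\mathbb{I}(\mathcal{D};d,n)$ into $\mathbb{C}I_{r}$, so $\phi$ restricts to a character of $\mathbb{I}(\mathcal{D};d,n)$. A character of a commutative unital Banach algebra is automatically continuous, so this one is evaluation at a point $\mathfrak{u}_{0}$ of the maximal ideal space of $\mathbb{I}(\mathcal{D};d,n)$; by the discussion of natural function algebras following Corollary \ref{cor:Shilov_boundary_ideal}, that maximal ideal space is the $\mathbb{I}_{0}(d,n)$-convex hull of $\overline{\mathcal{D}}$, which by the hypothesis of the theorem equals $\overline{\mathcal{D}}$. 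Hence $\mathfrak{u}_{0}\in\overline{\mathcal{D}}\subseteq Q_{0}(d,n)$; this is the only place the $\mathbb{I}_{0}(d,n)$-convexity is used, and without it $\mathfrak{u}_{0}$ might land on $Q(d,n)\setminus Q_{0}(d,n)$, where the fibre algebra of $\mathfrak{M}(d,n)$ degenerates. Now choose $\mathfrak{z}_{0}\in\mathcal{V}(d,n)$ with $\pi_{0}(\mathfrak{z}_{0})=\mathfrak{u}_{0}$ and let $\psi:=\phi\circ\iota:\mathbb{S}_{0}(d,n)\to M_{r}(\mathbb{C})$, where $\iota:\mathbb{S}_{0}(d,n)\to\mathbb{S}$ is the canonical homomorphism $F\mapsto\Psi(F)$ (with dense range). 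Using the identification of $\Psi$ on $\mathbb{I}_{0}(d,n)$ with the regular functions on $Q(d,n)$ together with $\pi_{0}(\mathfrak{z}_{0})=\mathfrak{u}_{0}$, we get $\psi(p)=p(\mathfrak{z}_{0})I_{r}$ for every $p\in\mathbb{I}_{0}(d,n)$. Consequently, with $\mathfrak{m}_{\mathfrak{u}_{0}}\subseteq\mathbb{I}_{0}(d,n)$ the maximal ideal at $\mathfrak{u}_{0}$, the homomorphism $\psi$ annihilates the two-sided ideal $\mathfrak{m}_{\mathfrak{u}_{0}}\mathbb{S}_{0}(d,n)$, since $\psi(\mathfrak{m}_{\mathfrak{u}_{0}}\mathbb{S}_{0}(d,n))=\psi(\mathfrak{m}_{\mathfrak{u}_{0}})\,\psi(\mathbb{S}_{0}(d,n))=\{0\}$.

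Now invoke $\mathfrak{u}_{0}\in Q_{0}(d,n)$. By Procesi's principal bundle theorem \cite[Theorem 5.10]{Procesi1974} — equivalently, by the fact that the Azumaya locus of the trace algebra of generic matrices is exactly the irreducible locus $Q_{0}(d,n)$ — $\mathbb{S}_{0}(d,n)$, restricted to $Q_{0}(d,n)$, is an Azumaya algebra of rank $n^{2}$ over $\mathbb{I}_{0}(d,n)$; in particular $\mathbb{S}_{0}(d,n)/\mathfrak{m}_{\mathfrak{u}_{0}}\mathbb{S}_{0}(d,n)\cong M_{n}(\mathbb{C})$. Since $M_{n}(\mathbb{C})$ is simple and $\psi$ annihilates $\mathfrak{m}_{\mathfrak{u}_{0}}\mathbb{S}_{0}(d,n)$, $\psi$ factors through $\mathbb{S}_{0}(d,n)/\mathfrak{m}_{\mathfrak{u}_{0}}\mathbb{S}_{0}(d,n)\cong M_{n}(\mathbb{C})$, yielding a unital homomorphism $M_{n}(\mathbb{C})\to M_{r}(\mathbb{C})$; it is injective by simplicity of $M_{n}(\mathbb{C})$, so $n\le r$, contradicting $r<n$. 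This proves \emph{(ii)}, and with \emph{(i)} completes the proof. I expect the step just used — that $\mathbb{S}_{0}(d,n)$ restricted to $Q_{0}(d,n)$ is Azumaya of rank $n^{2}$, so that $\mathbb{S}_{0}(d,n)/\mathfrak{m}_{\mathfrak{u}}\mathbb{S}_{0}(d,n)$ is a full matrix algebra for every $\mathfrak{u}\in Q_{0}(d,n)$ — to be the main obstacle: it belongs to the circle of ideas around Procesi's Theorem 5.10 and Artin's Theorem 8.3 (a principal $PGL(n,\mathbb{C})$-bundle structure on $\mathfrak{V}(d,n)$ is precisely a local trivialization of this Azumaya algebra), but it must be extracted from the literature and matched carefully to the objects at hand. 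A secondary point requiring care is the bookkeeping showing that the composite $\mathbb{S}_{0}(d,n)\to\mathbb{S}\to M_{r}(\mathbb{C})$ sends $p\in\mathbb{I}_{0}(d,n)$ to $p(\mathfrak{z}_{0})I_{r}$, which rests on the earlier description of $\Psi$ on $\mathbb{I}_{0}(d,n)$ together with automatic continuity of characters.
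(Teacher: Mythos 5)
Your argument is correct in substance but follows a genuinely different route from the paper's. The paper verifies Procesi's form of the Artin--Procesi criterion \cite[Theorem VIII.2.1]{Procesi1973} directly: it invokes the Reichstein--Vonessen lemma \cite[Lemma 2.10]{RV2007} to produce, for each point of $\overline{\mathcal{D}}$, a central polynomial equal to $I_{n}$ there, and then uses compactness of $\overline{\mathcal{D}}$, qua maximal ideal space of $\mathbb{I}(\mathcal{D};d,n)$ (this is where convexity enters), to conclude that the Formanek center generates the unit ideal, $F(\mathbb{S})\mathbb{S}=\mathbb{S}$. You instead verify the representation-theoretic form of Artin's criterion, pushing the hard work into the assertion that $\mathbb{S}_{0}(d,n)/\mathfrak{m}_{\mathfrak{u}}\mathbb{S}_{0}(d,n)\cong M_{n}(\mathbb{C})$ for $\mathfrak{u}\in Q_{0}(d,n)$. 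That assertion is true and citable (it is the Azumaya-locus description of the trace ring, and is essentially equivalent to the central-polynomial input the paper uses), and your use of the convexity hypothesis is exactly parallel to the paper's: it forces the character of the center induced by a small representation to live on $\overline{\mathcal{D}}\subseteq Q_{0}(d,n)$. The paper's route is shorter given Procesi's criterion; yours makes the role of convexity and of the irreducible locus more visible.

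One step needs shoring up. The criterion you quote --- satisfies the identities of $M_{n}(\mathbb{C})$ and has no unital representations in $M_{r}(\mathbb{C})$ for $r<n$ --- is not sufficient for an arbitrary unital $\mathbb{C}$-algebra: the field $\mathbb{C}(t)$ with $n=2$ satisfies both conditions yet is not a rank-$4$ Azumaya algebra over its center. The robust Artin--Procesi condition is that no nonzero homomorphic image of the algebra satisfies the identities of $M_{n-1}(\mathbb{C})$, and a quotient by a non-closed ideal could a priori be, say, $M_{n-1}(F)$ for a large field $F$, which admits no unital representation in any $M_{r}(\mathbb{C})$ at all. For $\mathbb{S}=\mathbb{S}(\mathcal{D},\mathfrak{P};d,n)$ the gap closes because $\mathbb{S}$ is a unital Banach algebra: any proper two-sided ideal is contained in a maximal one; a maximal two-sided ideal is closed (it contains no invertibles, so its closure is still proper); the resulting simple unital Banach quotient is primitive and PI, hence by Kaplansky's theorem is a matrix algebra over a division algebra finite-dimensional over its center, and that center, being a commutative Banach field, is $\mathbb{C}$ by Gelfand--Mazur. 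Thus every nonzero quotient of $\mathbb{S}$ maps onto some $M_{k}(\mathbb{C})$ with $k\leq n$, and your condition \emph{(ii)} then genuinely rules out quotients satisfying the identities of $M_{n-1}(\mathbb{C})$. With that paragraph supplied (and the Azumaya-locus fact pinned down in the literature), your proof is complete.
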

\begin{cor}
\label{cor:ideals}If $\overline{\mathcal{D}}$ is $\mathbb{I}_{0}(d,n)$-convex,
then there is a bijective correspondence between ideals $\mathfrak{a}$
of $\mathbb{I}(\mathcal{D};d,n)$ and ideals $\mathfrak{A}$ of $\mathbb{S}(\mathcal{D},\mathfrak{P};d,n)$
given by $\mathfrak{a}\to\mathfrak{a}\mathbb{S}(\mathcal{D},\mathfrak{P};d,n)$
and $\mathfrak{A}\to\mathfrak{A}\cap\mathbb{I}(\mathcal{D};d,n)$.\end{cor}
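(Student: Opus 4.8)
The plan is to read Corollary~\ref{cor:ideals} as the classical ideal correspondence for an Azumaya algebra over its center, applied to the situation produced by Theorem~\ref{thm:Azumaya}. Write $\mathbb{S}=\mathbb{S}(\mathcal{D},\mathfrak{P};d,n)$ and $R=\mathbb{I}(\mathcal{D};d,n)$; by Theorem~\ref{thm:Azumaya} (valid because $\overline{\mathcal{D}}$ is $\mathbb{I}_{0}(d,n)$-convex) $\mathbb{S}$ is an Azumaya algebra of rank $n^{2}$ with center $R$. That the two assignments make sense is immediate: if $\mathfrak{a}$ is an ideal of $R$ then $\mathfrak{a}\mathbb{S}=\mathbb{S}\mathfrak{a}$, since $\mathfrak{a}\subseteq R$ is central, so $\mathfrak{a}\mathbb{S}$ is a two-sided ideal of $\mathbb{S}$; and if $\mathfrak{A}$ is a two-sided ideal of $\mathbb{S}$ then $\mathfrak{A}\cap R$ is an ideal of $R$. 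The content is therefore the two identities
\[
(\mathfrak{a}\mathbb{S})\cap R=\mathfrak{a}\qquad\text{and}\qquad(\mathfrak{A}\cap R)\mathbb{S}=\mathfrak{A}.
\]

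For the first identity I would use the normalized reduced trace. Since $\mathbb{S}$ has rank $n^{2}$ over the commutative unital $\mathbb{C}$-algebra $R$, the integer $n$ is invertible in $R$, so $t:=\tfrac1n\operatorname{trd}_{\mathbb{S}/R}\colon\mathbb{S}\to R$ is an $R$-linear map with $t|_{R}=\operatorname{id}_{R}$. If $x=\sum_{k}a_{k}s_{k}\in(\mathfrak{a}\mathbb{S})\cap R$ with $a_{k}\in\mathfrak{a}$ and $s_{k}\in\mathbb{S}$, then $x=t(x)=\sum_{k}a_{k}\,t(s_{k})\in\mathfrak{a}$; the reverse inclusion is trivial. (Any $R$-linear retraction of $R\hookrightarrow\mathbb{S}$ would serve equally well; the reduced trace merely supplies one explicitly.)

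The substance is the second identity, and this is where the full Azumaya hypothesis, rather than mere projectivity of $\mathbb{S}$ over $R$, is used. I would argue as in the standard theory: regard $\mathbb{S}$ as a left module over $\mathbb{S}^{e}:=\mathbb{S}\otimes_{R}\mathbb{S}^{\mathrm{op}}$ via $(s\otimes t)\cdot u=sut$, so that the $\mathbb{S}^{e}$-submodules of $\mathbb{S}$ are exactly its two-sided ideals. By definition of an Azumaya algebra the canonical map $\mathbb{S}^{e}\to\operatorname{End}_{R}(\mathbb{S})$ is an isomorphism and $\mathbb{S}$ is an $R$-progenerator; Morita theory for the progenerator $\mathbb{S}$ then gives an equivalence between $\mathbb{S}^{e}\cong\operatorname{End}_{R}(\mathbb{S})$-modules and $R$-modules that carries $\mathbb{S}$ (as an $\mathbb{S}^{e}$-module) to $R$, and hence restricts to a lattice isomorphism between the two-sided ideals of $\mathbb{S}$ and the ideals of $R$. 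Unwinding the functors, the ideal of $R$ attached to a two-sided ideal $\mathfrak{A}$ is $\operatorname{Hom}_{\mathbb{S}^{e}}(\mathbb{S},\mathfrak{A})$, which one checks is canonically $\mathfrak{A}\cap R$ — an $\mathbb{S}^{e}$-linear map $\mathbb{S}\to\mathfrak{A}$ has the form $u\mapsto um$ for $m=f(1)$, and $\mathbb{S}^{e}$-linearity forces $m$ to be central, whence $m\in\mathfrak{A}\cap R$ — while the two-sided ideal attached to an ideal $\mathfrak{a}\subseteq R$ is the image of $\mathbb{S}\otimes_{R}\mathfrak{a}\hookrightarrow\mathbb{S}$, namely $\mathfrak{a}\mathbb{S}$; that these two assignments are mutually inverse is exactly the corollary. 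In the write-up I would not reproduce this but cite \cite{DeMeyer-Ingraham_1971}, where the ideal correspondence is established for an arbitrary Azumaya algebra over its center, alongside Artin's \cite[Theorem~8.3]{Artin1969} already invoked for Theorem~\ref{thm:Azumaya}. I expect the only genuine obstacle, beyond this second identity, to be bookkeeping: if the ideals in the statement are meant to be \emph{closed} ideals, one must also check that $\mathfrak{a}\mathbb{S}$ is closed when $\mathfrak{a}$ is and that $\mathfrak{A}\cap R$ is closed when $\mathfrak{A}$ is. The latter is immediate since $R$ is the center of the Banach algebra $\mathbb{S}$ and hence closed; for the former one uses that $\mathbb{S}$ is finitely generated and projective over $R$, so that $\mathbb{S}$ is a complemented $R$-submodule of some $R^{m}$ and $\mathfrak{a}\mathbb{S}=(\mathfrak{a}R^{m})\cap\mathbb{S}$ is closed.
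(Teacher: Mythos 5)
Your proposal is correct and follows essentially the same route as the paper: the paper's entire proof is a one-line citation of Corollary II.3.7 of \cite{DeMeyer-Ingraham_1971} (the ideal correspondence for an arbitrary Azumaya algebra over its center), which is precisely the result you sketch via the reduced trace and Morita theory and then propose to cite. Your closing remark on closed versus algebraic ideals addresses a point the paper leaves implicit, and your argument for it is sound.
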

\begin{proof}
This is an application of Corollary II.3.7 of \cite{DeMeyer-Ingraham_1971},
which is valid for any Azumaya algebra.
\end{proof}

\section{The Concomitants and Cross Sections\label{sec:Proof_of_thm_Cross section}}

The map we call $\Psi$ in Theorem \ref{thm: Cross sectional Algebra}
is a special case of the bijection described in \cite[Theorem 4.8.1]{Husemoller1994}.
There, Husemoller deals with general fibre bundles associated to principal
bundles. However, when specialized to our setting it is clear that
$\Psi$ is a bijection that takes continuous concomitants to continuous
cross sections. It also clearly preserves the algebraic structures
involved. So to prove Theorem \ref{thm: Cross sectional Algebra},
it suffices to show that $\Psi$ maps holomorphic concomitants to
holomorphic cross sections and that $\Psi^{-1}$ maps holomorphic
cross sections to holomorphic concomitants. 

Since the property of being holomorphic is a local property, we may
restrict our attention to an open subset $U\subseteq Q_{0}(d,n)$
over which $\mathcal{V}(d,n)$ is trivial. We let $\mathcal{V}_{0}=\pi_{0}^{-1}(U)$,
so $\mathcal{V}_{0}$ is an open, $G$-invariant subset of $\mathcal{V}(d,n)$,
and we fix a biholomorphic bundle isomorphism $F:U\times G\to\mathcal{V}_{0}$.
Thus $F$ is $G$-equivariant and $\pi_{0}\circ F=\pi_{1}$, where
$\pi_{1}$ is the projection of $U\times G$ onto the first factor.
(This implies that $\mathfrak{u}\to F(\mathfrak{u},e)$ is a holomorphic
section of $\mathfrak{V}(d,n)\vert_{U}$, and conversely, each holomorphic
section $f$ of $\mathfrak{V}(d,n)\vert_{U}$ determines a biholomorphic
bundle isomorphism from $U\times G$ onto $\mathcal{V}_{0}$ via the
formula $F(\mathfrak{u},g)=f(\mathfrak{u})g$.) The isomorphism $F$,
in turn, induces a biholomorphic bundle isomorphism $\widehat{F}:U\times M_{n}(\mathbb{C})\to\mathcal{V}_{0}\times_{G}M_{n}(\mathbb{C})$
via the formula $\widehat{F}(\mathfrak{u},A)=[F(\mathfrak{u},e),A]$. 

Suppose that $\phi:\mathcal{V}(d,n)\to M_{n}(\mathbb{C})$ is a holomorphic
matrix concomitant. Then the restriction to $U$ of the section $\sigma_{\phi}$
defined above is given by the formula 
\[
\sigma_{\phi}(\mathfrak{u})=[\mathfrak{z},\phi(\mathfrak{z})],\qquad\mathfrak{u}\in U,
\]
where $\mathfrak{z}\in\mathcal{V}_{0}$ is any point such that $\pi_{0}(\mathfrak{z})=\mathfrak{u}$.
To show $\sigma_{\phi}$ is holomorphic on $U$, it suffices to show
that $\widehat{F}^{-1}\circ\sigma_{\phi}$ is holomorphic on $U$.
To get a formula for $\widehat{F}^{-1}\circ\sigma_{\phi}$, fix both
$\mathfrak{u}\in U$ and $\mathfrak{z}\in\mathcal{V}_{0}$ such that
$\pi_{0}(\mathfrak{z})=\mathfrak{u}.$ Then there is a unique $g\in G$
such that $F(\mathfrak{u},g)=\mathfrak{z}$. Since we also have $F(\mathfrak{u},g)=F(\mathfrak{u},e)g$,
we arrive at the following equation,
\begin{multline*}
\widehat{F}^{-1}\circ\sigma_{\phi}(\mathfrak{u})=\widehat{F}^{-1}([\mathfrak{z},\phi(\mathfrak{z})])=\widehat{F}^{-1}([F(\mathfrak{u},g),\phi(\mathfrak{z})])\\
=\widehat{F}^{-1}([F(\mathfrak{u},e),g\cdot\phi(\mathfrak{z})])=\widehat{F}^{-1}(\widehat{F}(\mathfrak{u},\phi(\mathfrak{z}\cdot g^{-1})))\\
=(\mathfrak{u},\phi(\mathfrak{z}\cdot g^{-1}))=(\mathfrak{u},\phi\circ F(\mathfrak{u},e)),
\end{multline*}
which shows that $\widehat{F}^{-1}\circ\sigma_{\phi}$ is holomorphic
on $U$, since $\mathfrak{u}\to(\mathfrak{u},\phi\circ F(\mathfrak{u},e))$
is certainly holomorphic.

If $\sigma$ is a holomorphic section of $\mathfrak{M}(d,n)$, then
to show that $\phi_{\sigma}$ is holomorphic, it suffices to show
that the restruction of $\phi_{\sigma}$ to $\mathcal{V}_{0}$ is
holomorphic; and for this, it suffices to show that $\phi_{\sigma}\circ F$
is holomorphic on $U\times G$. Since $\mathfrak{M}(d,n)$ is trivial
over $U$ and $\sigma\vert_{U}$ is a section of $\mathfrak{M}(d,n)\vert_{U}$
, $\widehat{F}^{-1}\circ\sigma\vert_{U}$ a section of the product
bundle $U\times M_{n}(\mathbb{C})$ over $U$. Consequently, there
is a function $f:U\to M_{n}(\mathbb{C})$ such that $\widehat{F}^{-1}\circ\sigma(\mathfrak{u})=(\mathfrak{u},f(\mathfrak{u}))$.
The assumption that $\sigma$ is holomorphic guarentees that $f$
is holomorphic, too. On the other hand, the matrix concomitant $\phi_{\sigma}$
determined by $\sigma$ satisfies \eqref{eq:Fund_equat}. Therefore,
$(\mathfrak{u},f(\mathfrak{u}))=\widehat{F}^{-1}\circ\sigma(\mathfrak{u})=\widehat{F}^{-1}([\mathfrak{z},\phi_{\sigma}(\mathfrak{z})])$
for any $\mathfrak{z}$ such that $\pi_{0}(\mathfrak{z})=\mathfrak{u}.$
So, 
\[
\widehat{F}(\mathfrak{u},f(\mathfrak{u}))=[\mathfrak{z},\phi_{\sigma}(\mathfrak{z})].
\]
However, by definition of $\widehat{F}$ in terms of $F$, we may
rewrite the left-hand side of this equation as
\begin{multline*}
\widehat{F}(\mathfrak{u},f(\mathfrak{u}))=[F(\mathfrak{u},e),f(\mathfrak{u})]\\
=[F(\mathfrak{u},e)\cdot g,g^{-1}\cdot f(\mathfrak{u})]=[F(\mathfrak{u},g),g^{-1}\cdot f(\mathfrak{u})].
\end{multline*}

If we write $\mathfrak{z}=F(\mathfrak{u},g)$, these two equations
yield
\[
[F(\mathfrak{u},g),g^{-1}\cdot f(\mathfrak{u})]=\widehat{F}(\mathfrak{u},f(\mathfrak{u}))=[F(\mathfrak{u},g),\phi_{\sigma}(F(\mathfrak{u},g))].
\]
Hence, there is an $h\in G$ such $F(\mathfrak{u},g)\cdot h=F(\mathfrak{u},g)$
and $h^{-1}\cdot g^{-1}\cdot f(\mathfrak{u})=\phi_{\sigma}(F(\mathfrak{u},g))$.
However, since $G$ acts freely on $\mathcal{V}(d,n)$, we conclude
that $h=e$, proving that
\[
\phi_{\sigma}\circ F(\mathfrak{u},g)=g^{-1}\cdot f(\mathfrak{u}).
\]
Since $f$ is holomorphic on $U$ and the action of $G$ on $M_{n}(\mathbb{C})$
is holomorphic, we see that $\phi_{\sigma}\circ F$ is holomorphic
on $U\times G$, as required. This completes the proof of the first
assertion in Theorem \ref{thm: Cross sectional Algebra}.

Turning to the second, we begin with the following theorem. It, or
something akin to it, seems to have been known to Luminet \cite[Remark 4.14]{Lum97}.
However, no proof or reference was given. We are grateful to Zinovy
Reichstein for the formulation of the theorem and for allowing us
to include his proof here.
\begin{thm}
\label{thm:Reichstein}Suppose $d,n\geq2$ and for $k=1,2,\cdots,n-1$,
let $X_{k}$ be the set of all $(A_{1},A_{2},\cdots,A_{d})\in M_{n}(\mathbb{C})^{d}$
such that the $A_{i}$ have a common $k$-dimensional invariant subspace.
Then $X_{k}$ is an irreducible algebraic variety of dimension $dn^{2}-(d-1)k(n-k)$,
and 
\[
\cup_{k=1}^{n-1}X_{k}=M_{n}(\mathbb{C})^{d}\backslash\mathcal{V}(d,n).
\]
\end{thm}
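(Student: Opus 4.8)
The plan is to realize $X_k$ as the image, under a linear projection, of a smooth irreducible incidence variety lying over a Grassmannian, and then to read off both irreducibility and dimension from that description. The set identity $\bigcup_{k=1}^{n-1}X_k=\MnCd\setminus\mathcal{V}(d,n)$ is Burnside's theorem in disguise, and I would dispose of it first: by definition $\mathcal{V}(d,n)$ consists of the tuples $(A_1,\dots,A_d)$ for which $A_1,\dots,A_d$ generate $\MnC$ as an algebra, and Burnside's theorem says this happens precisely when the $A_i$ admit no common invariant subspace other than $0$ and $\mathbb{C}^n$; hence a tuple lies outside $\mathcal{V}(d,n)$ exactly when it has a common invariant subspace of some dimension $k$ with $1\le k\le n-1$, i.e.\ lies in $X_k$ for that value of $k$.

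For irreducibility and dimension, let $\mathrm{Gr}(k,n)$ be the Grassmannian of $k$-dimensional subspaces of $\mathbb{C}^n$ and set
\[
\widetilde{X}_k=\bigl\{(W,A_1,\dots,A_d)\in\mathrm{Gr}(k,n)\times\MnCd : A_iW\subseteq W,\ 1\le i\le d\bigr\}.
\]
I would check that the first projection $\widetilde X_k\to\mathrm{Gr}(k,n)$ exhibits $\widetilde X_k$ as a vector subbundle of the trivial bundle $\mathrm{Gr}(k,n)\times\MnCd$: it is the kernel of the bundle map sending $(W,(A_i))$ to the $d$-tuple of composites $W\hookrightarrow\mathbb{C}^n\xrightarrow{A_i}\mathbb{C}^n\twoheadrightarrow\mathbb{C}^n/W$, and $GL(n,\mathbb{C})$-equivariance for the transitive action on the Grassmannian forces this map to have constant rank. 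The fibre over a fixed $W$ is the space of $d$-tuples of matrices that are block upper-triangular with respect to $W$, of dimension $d\bigl(k^2+k(n-k)+(n-k)^2\bigr)=d\bigl(n^2-k(n-k)\bigr)$. Since $\mathrm{Gr}(k,n)$ is smooth, irreducible, and complete of dimension $k(n-k)$, the total space $\widetilde X_k$ is smooth and irreducible of dimension $k(n-k)+d\bigl(n^2-k(n-k)\bigr)=dn^2-(d-1)k(n-k)$.

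Next, $X_k$ is precisely the image of $\widetilde X_k$ under the projection $\mathrm{pr}\colon\mathrm{Gr}(k,n)\times\MnCd\to\MnCd$. Completeness of $\mathrm{Gr}(k,n)$ makes $\mathrm{pr}$ proper, hence closed, so $X_k$ is Zariski closed in $\MnCd$, and being the image of an irreducible variety it is irreducible. To see that the dimension is preserved I would argue that $\mathrm{pr}|_{\widetilde X_k}$ is generically finite: each fibre of a dominant morphism of irreducible varieties has dimension at least $\dim\widetilde X_k-\dim X_k\ge 0$, so it suffices to produce one tuple in $X_k$ whose fibre — its set of $k$-dimensional common invariant subspaces — is finite. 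The tuple $(A_1,0,\dots,0)$ with $A_1=\mathrm{diag}(1,2,\dots,n)$ does this: it lies in $X_k$ because every coordinate subspace is invariant, while any common invariant subspace is in particular $A_1$-invariant, hence a sum of eigenlines of $A_1$, of which there are exactly $\binom{n}{k}$ of dimension $k$. Consequently $\dim X_k=\dim\widetilde X_k=dn^2-(d-1)k(n-k)$.

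The step I expect to need the most care is this last one — guaranteeing that the dimension does not drop under $\mathrm{pr}$ — and the device that settles it cleanly is the regular-semisimple (here simply diagonal with distinct entries) first coordinate, which pins the generic fibre down to a finite set with no further computation. One should also not skip the constant-rank verification that promotes $\widetilde X_k$ from a variety merely fibred over $\mathrm{Gr}(k,n)$ to an honest vector bundle, though $GL(n,\mathbb{C})$-equivariance makes that routine.
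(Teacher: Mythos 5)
Your proposal is correct and follows essentially the same route as the paper: the incidence variety over $\mathrm{Gr}(k,n)$ whose fibres are the $d$-tuples of block-upper-triangular matrices, the dimension count $dn^{2}-(d-1)k(n-k)$ via the fibre dimension over the Grassmannian, and then projection to $M_{n}(\mathbb{C})^{d}$ with generic finiteness of the fibres detected by a first coordinate with distinct eigenvalues. The only cosmetic differences are that you make the Burnside step and the closedness of $X_{k}$ explicit and exhibit a single finite fibre, where the paper invokes the fibre dimension theorem and the Zariski-open locus of tuples whose first entry has distinct eigenvalues.
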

\begin{proof}
Evidently, the union of the $X_{k}$ is $M_{n}(\mathbb{C})^{d}\backslash\mathcal{V}(d,n)$.
Let ${\rm Gr}(k,n)$ denote the Grassmannian consisting of all $k$-dimensional
subspaces of $\mathbb{C}^{n}$ and let
\[
Y_{k}=\{(A_{1},A_{2},\cdots,A_{d};W)\in M_{n}(\mathbb{C})^{d}\times{\rm Gr}(k,n)\mid A_{i}W\subseteq W,\,1\leq i\leq d\}.
\]
Clearly, $Y_{k}$ is an algebraic subvariety of $M_{n}(\mathbb{C})^{d}\times{\rm Gr}(k,n)$.
Let $\pi_{2k}:Y_{k}\to{\rm Gr}(k,n)$ be the projection onto the last
component. Then $\pi_{2k}$ is surjective, and its fibres are vector
spaces of block-upper triangular matrices (in appropriate bases),
with blocks of size $k$ and $n-k$. So the fibres are irreducible
varieties of the same dimension, viz., $d(n^{2}-k(n-k)).$ By the
fibre dimension theorem \cite[Theorem I.6.7, p.76]{Shafarevich_1_2007},
the $Y_{k}$ are irreducible and 
\[
\dim Y_{k}=d(n^{2}-k(n-k))+\dim{\rm Gr}(k,n)=dn^{2}-(d-1)k(n-k).
\]
Consider the map $\pi_{1k}:Y_{k}\to M_{n}(\mathbb{C})^{d}$ which
projects onto the first $d$ components. The image of $\pi_{1k}$
is $X_{k}$. Therefore, $X_{k}$ is irreducible. Further, the set
of $(A_{1},A_{2},\cdots,A_{d})\in X_{k}$ such that $A_{1}$ has distinct
eigenvalues is a Zariski open subset of $X_{k}$ and so $\dim X_{k}=\dim Y_{k}=dn^{2}-(d-1)k(n-k)$,
as claimed.
\end{proof}
If $(d,n)\neq(2,2)$, the complement of $\mathcal{V}(d,n)$ in $M_{n}(\mathbb{C})^{d}$
is the finite union of algebraic varieties of codimension $\geq2$
by Theorem \eqref{thm:Reichstein}. Consequently, by \cite[Theorem K.1]{Gunning_II_1990}
every function that is holomorphic on $\mathcal{V}(d,n)$ extends
uniquely to a function that is holomorphic on all of $M_{n}(\mathbb{C})^{d}$. 

Suppose, finally, $(d,n)=(2,2)$, and consider the commutator $[Z_{1},Z_{2}]$
in $\mathbb{G}_{0}(2,2)$. It is well known in some circles that $\mathcal{V}(2,2)=\{\mathfrak{z}=(Z_{1},Z_{2})\mid[Z_{1},Z_{2}]\,\mbox{is invertible}\}$.
Since we don't have an explicit reference for this, here is a simple
proof: One may assume, without loss of generality, that $Z_{1}$ is
in Jordan canonical form and that $Z_{1}$ either has distinct eigenvalues
or is the Jordan cell, $\begin{bmatrix}0 & 1\\
0 & 0
\end{bmatrix}$. If $Z_{1}$ has distinct eigenvalues, say $a$ and $c$, then we
may write
\[
[Z_{1},Z_{2}]=\left[\begin{bmatrix}a & 0\\
0 & c
\end{bmatrix},\begin{bmatrix}w & x\\
y & z
\end{bmatrix}\right]=\begin{bmatrix}0 & (a-c)x\\
(c-a)y & 0
\end{bmatrix}.
\]
If $Z_{1}=\begin{bmatrix}0 & 1\\
0 & 0
\end{bmatrix}$, then
\[
[Z_{1},Z_{2}]=\left[\begin{bmatrix}0 & 1\\
0 & 0
\end{bmatrix},\begin{bmatrix}w & x\\
y & z
\end{bmatrix}\right]=\begin{bmatrix}-y & w-z\\
0 & y
\end{bmatrix}.
\]
In either case, it is clear that $[Z_{1},Z_{2}]$ is invertible if
and only $Z_{1}$ and $Z_{2}$ have no common invariant subspace.
Thus $\det[Z_{1},Z_{2}]$ is a polynomial in $\mathbb{I}(2,2)$ whose
zero set is $M_{2}(\mathbb{C})^{2}\backslash\mathcal{V}(2,2)$. Thus
$f(\mathfrak{z}):=(\det[Z_{1},Z_{2}])^{-1}$is a holomorphic matrix
concomitant on $\mathcal{V}(2,2)$ that cannot be analytically extended
beyond $\mathcal{V}(2,2)$. Thus $\mathcal{V}(2,2)$ is a domain of
holomorphy in $M_{2}(\mathbb{C})^{2}$ and the proof of Theorem \ref{thm: Cross sectional Algebra}
is complete.

\section{Function Theory in Bundles\label{sec:Function-Theory-in-Bundles}}

Our first objective is to show that the center of $\mathbb{S}(\mathcal{D},\mathfrak{P};d,n)$
is $\mathbb{I}(\mathcal{D};d,n)$ independent of the reduction $\mathfrak{P}$.
Of course, $\mathbb{I}(\mathcal{D};d,n)$ is contained in the center.
The problem is the reverse inclusion. It is easy to see that every
element in the center of $\mathbb{S}(\mathcal{D},\mathfrak{P};d,n)$
is the restriction to $\partial\mathcal{D}$ of a continuous function
on $\overline{\mathcal{D}}$ that is holomorphic on $\mathcal{D}$,
but why must it be in $\mathbb{I}(\mathcal{D};d,n)$? The reason is
due, really, to Procesi who shows that the center of $\mathbb{S}_{0}(d,n)$
is $\mathbb{I}_{0}(d,n)$ \cite[Page 94]{Procesi1973}. 

First, note that the cross section $\varepsilon$ in $\Gamma_{c}(\partial\mathcal{D},\mathfrak{M}^{*}(\mathfrak{P};d,n))$
defined by the formula $\varepsilon([\mathfrak{z}]):=[\mathfrak{z},I_{n}]$,
where $I_{n}$ is the identity $n\times n$ matrix, is the identity
of $\Gamma_{c}(\partial\mathcal{D},\mathfrak{M}^{*}(\mathfrak{P};d,n))$.
Further, the center of $\Gamma_{c}(\partial\mathcal{D},\mathfrak{M}^{*}(\mathfrak{P};d,n))$,
$\mathfrak{Z}\Gamma_{c}(\partial\mathcal{D},\mathfrak{M}^{*}(\mathfrak{P};d,n))$,
is the set of all cross sections $\sigma$ of the form $\sigma([\mathfrak{z}])=[\mathfrak{z},c([\mathfrak{z}])I_{n}]$,
where $c:\partial\mathcal{D}\to\mathbb{C}$ is a continuous complex-valued
function. We shall usually write such a section as $c\cdot\varepsilon$,
and we shall identify $\mathfrak{Z}\Gamma_{c}(\partial\mathcal{D},\mathfrak{M}^{*}(\mathfrak{P};d,n))$
with $C(\partial\mathcal{D})$ through the isomorphism $c\to c\cdot\varepsilon$.
We shall write $\tau_{0}$ for the normalized trace on $M_{n}(\mathbb{C})$,
i.e., $\tau_{0}(I_{n})=1$, and we shall define $\tau:\mathfrak{M}^{*}(\mathfrak{P};d,n)\to\mathbb{C}$
by $\tau([\mathfrak{z},A]):=\tau_{0}(A)$. Then $\tau$ is a well-defined
continuous function on $\mathfrak{M}^{*}(\mathfrak{P};d,n)$. We now
define 
\[
T:\Gamma_{c}(\partial\mathcal{D},\mathfrak{M}^{*}(\mathfrak{P};d,n))\to\mathfrak{Z}\Gamma_{c}(\partial\mathcal{D},\mathfrak{M}^{*}(\mathfrak{P};d,n))
\]
 by the formula, 
\[
T(\sigma):=\tau\circ\sigma\cdot\varepsilon,\qquad\sigma\in\Gamma_{c}(\partial\mathcal{D},\mathfrak{M}^{*}(\mathfrak{P};d,n)).
\]
Then it is straightforward to verify that $T$ is a conditional expectation
from $\Gamma_{c}(\partial\mathcal{D},\mathfrak{M}^{*}(\mathfrak{P};d,n))$
onto $\mathfrak{Z}\Gamma_{c}(\partial\mathcal{D},\mathfrak{M}^{*}(\mathfrak{P};d,n))$
that also satisfies the equation 
\[
T(\Psi(\phi))([\mathfrak{z}])=\tau_{0}(\phi(\mathfrak{z}))\varepsilon([\mathfrak{z}]),\qquad\phi\in\mathbb{S}_{0}(d,n),\,\mathfrak{z}\in\mathcal{V}(d,n).
\]

\begin{thm}
\label{thm:Center} $T$ maps $\mathbb{S}(\mathcal{D},\mathfrak{P};d,n)$
onto $\mathbb{I}(\mathcal{D};d,n)$ and $\mathbb{I}(\mathcal{D};d,n)$
is the center of $\mathbb{S}(\mathcal{D},\mathfrak{P};d,n)$.\end{thm}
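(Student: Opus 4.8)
The plan is to establish the theorem in two stages: first show $T$ maps $\mathbb{S}(\mathcal{D},\mathfrak{P};d,n)$ \emph{onto} $\mathbb{I}(\mathcal{D};d,n)$, and then use the conditional-expectation property of $T$ together with the first claim to pin down the center. Throughout I will lean on Procesi's theorem that the center of $\mathbb{S}_0(d,n)$ equals $\mathbb{I}_0(d,n)$ \cite[Page 94]{Procesi1973}, transported to the bundle picture via $\Psi$.

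\emph{Step 1: $T(\mathbb{S}(\mathcal{D},\mathfrak{P};d,n)) = \mathbb{I}(\mathcal{D};d,n)$.} First I would observe that $T$ is continuous (it is norm-decreasing, being a conditional expectation), and that on the dense subalgebra $\Psi(\mathbb{S}_0(d,n))$ it acts by $T(\Psi(\phi))([\mathfrak{z}]) = \tau_0(\phi(\mathfrak{z}))\,\varepsilon([\mathfrak{z}])$. Now $\phi \mapsto \tau_0(\phi(\,\cdot\,))$ sends $\mathbb{S}_0(d,n)$ into $\mathbb{I}_0(d,n)$: indeed $\tau_0 \circ \phi$ is a polynomial function on $M_n(\mathbb{C})^d$ (since $\phi$ has polynomial entries and $\tau_0$ is linear), and it is $G$-invariant because $\phi$ is a concomitant, so $\tau_0(\phi(s^{-1}\mathfrak{z}s)) = \tau_0(s^{-1}\phi(\mathfrak{z})s) = \tau_0(\phi(\mathfrak{z}))$. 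Hence $T(\Psi(\phi)) = \Psi(\tau_0 \circ \phi) \in \Psi(\mathbb{I}_0(d,n))$, viewing each scalar invariant as the section $[\mathfrak{z}] \mapsto [\mathfrak{z}, c(\mathfrak{z})I_n]$. Taking closures and using continuity of $T$, we get $T(\mathbb{S}(\mathcal{D},\mathfrak{P};d,n)) \subseteq \mathbb{I}(\mathcal{D};d,n)$. For the reverse inclusion: $\mathbb{I}_0(d,n) \subseteq \mathbb{S}_0(d,n)$ and $T$ fixes central elements, so $T(\Psi(f)) = \Psi(f)$ for $f \in \mathbb{I}_0(d,n)$; taking closures gives $\mathbb{I}(\mathcal{D};d,n) \subseteq T(\mathbb{S}(\mathcal{D},\mathfrak{P};d,n))$. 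This settles the first assertion.

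\emph{Step 2: the center.} One inclusion, $\mathbb{I}(\mathcal{D};d,n) \subseteq \mathfrak{Z}\,\mathbb{S}(\mathcal{D},\mathfrak{P};d,n)$, is immediate since central elements of the ambient $C^*$-algebra that lie in $\mathbb{S}(\mathcal{D},\mathfrak{P};d,n)$ are certainly central in the subalgebra, and $\mathbb{I}(\mathcal{D};d,n)$ was already noted to sit inside the center of $\Gamma_c(\partial\mathcal{D},\mathfrak{M}^*(\mathfrak{P};d,n))$. For the reverse: let $\sigma$ be central in $\mathbb{S}(\mathcal{D},\mathfrak{P};d,n)$. The key point is that $T(\sigma) = \tau \circ \sigma \cdot \varepsilon$ actually \emph{equals} $\sigma$ for such $\sigma$ — this is where I would argue fibrewise. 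For each $\mathfrak{u} \in \partial\mathcal{D}$, apply $ev_{\mathfrak{u}}$ and use $\rho^{-1}$ as in the discussion preceding Theorem~\ref{thm:Boundary theorem} to regard $\sigma(\mathfrak{u})$ as an element of $M_n(\mathbb{C})$; centrality of $\sigma$ in $\mathbb{S}(\mathcal{D},\mathfrak{P};d,n)$ forces $\rho^{-1}(ev_{\mathfrak{u}}(\sigma))$ to commute with $\rho^{-1}(ev_{\mathfrak{u}}(\mathbb{S}(\mathcal{D},\mathfrak{P};d,n)))$. Since $\Psi(\mathbb{G}_0(d,n))$ already contains the generic matrices, whose images under $ev_{\mathfrak{u}}$ generate an irreducible subalgebra of $M_n(\mathbb{C})$ (because $\mathfrak{u} \in Q_0(d,n)$ is an irreducible point), that commutant is $\mathbb{C}I_n$ by Schur. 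Thus $\sigma(\mathfrak{u})$ is a scalar multiple of the identity for every $\mathfrak{u}$, so $\sigma = (\tau \circ \sigma)\cdot\varepsilon = T(\sigma)$, and by Step~1 this lies in $\mathbb{I}(\mathcal{D};d,n)$. Combining the two inclusions completes the proof.

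\emph{The main obstacle.} The delicate step is Step~2, specifically verifying that $ev_{\mathfrak{u}}$ applied to the generic-matrix sections yields an irreducible family in $M_n(\mathbb{C})$ — i.e., that irreducibility of $\mathfrak{u}$ as a point of $\mathcal{V}(d,n)/G$ survives passage to $\partial\mathcal{D} \subseteq Q_0(d,n)$ and to the completion $\mathbb{S}(\mathcal{D},\mathfrak{P};d,n)$. This is really a statement that the Choquet/Shilov boundary points still ``see'' the full matrix algebra through evaluation; it follows from the structure of $\mathfrak{M}^*(\mathfrak{P};d,n)$ recalled in the Introduction — for each $\mathfrak{u}$, choosing a representative $\mathfrak{z}$ with $\pi_0(\mathfrak{z}) = \mathfrak{u}$, the generic matrices evaluate to $Z_1,\dots,Z_d$, which generate $M_n(\mathbb{C})$ precisely because $\mathfrak{z} \in \mathcal{V}(d,n)$. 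I would spell this out carefully, perhaps isolating it as a preliminary remark, since it is the conceptual heart of why centrality collapses to scalar sections. The rest is routine continuity and density bookkeeping.
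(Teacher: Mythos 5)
Your proposal is correct and follows essentially the same route as the paper: check $T(\Psi(\phi))=\Psi(\tau_{0}\circ\phi)\in\mathbb{I}(\mathcal{D};d,n)$ on the dense subalgebra and pass to closures, then show a central $\sigma$ is fibrewise scalar so that $\sigma=T(\sigma)\in\mathbb{I}(\mathcal{D};d,n)$. Your explicit Schur-type argument at each $\mathfrak{u}\in\partial\mathcal{D}$ (the evaluated generic matrices generate $M_{n}(\mathbb{C})$ because every point of $Q_{0}(d,n)$ is irreducible) is exactly the justification the paper leaves implicit when it asserts that $\sigma([\mathfrak{z}])$ lies in the center of the fibre.
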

\begin{proof}
Since $T(\Psi(\phi))([\mathfrak{z}])=\tau_{0}(\phi(\mathfrak{z}))\varepsilon([\mathfrak{z}])$
for every $\phi\in\mathbb{S}_{0}(d,n)$ and since $\mathfrak{z}\to\tau_{0}(\phi(\mathfrak{z}))$
is a $G$-invariant polynomial function, the image of $T$ restricted
to $\mathbb{S}(\mathcal{D},\mathfrak{P};d,n)$ is contained in $\mathbb{I}(\mathcal{D};d,n)$.
If $\sigma$ is a section in the center of $\mathbb{S}(\mathcal{D},\mathfrak{P};d,n)$,
then $\sigma([\mathfrak{z}])$ lies in the center of the fibre of
$\mathfrak{M}^{*}(\mathfrak{P};d,n)$ over $[\mathfrak{z}]$, $\pi^{-1}([\mathfrak{z}])$,
which is (isomorphic to) $M_{n}(\mathbb{C})$. Thus $\sigma([\mathfrak{z}])$
is a multiple of $[\mathfrak{z},I_{n}]$. Hence, $\sigma\in\mathfrak{Z}\Gamma_{c}(\partial\mathcal{D},\mathfrak{M}^{*}(\mathfrak{P};d,n))$.
Since $\sigma\in\mathbb{S}(\mathcal{D},\mathfrak{P};d,n)$, there
is a sequence $\{\phi_{n}\}_{n\geq1}$ in $\mathbb{S}_{0}(d,n)$ such
that $\Psi(\phi_{n})\to\sigma$ in $\Gamma_{c}(\partial\mathcal{D},\mathfrak{M}^{*}(\mathfrak{P};d,n))$,
by definition of $\mathbb{S}(\mathcal{D},\mathfrak{P};d,n)$. But
then $T(\Psi(\phi_{n}))\to T(\sigma)=\sigma$ and each $T(\Psi(\phi_{n}))\in\mathbb{I}(\mathcal{D};d,n)$.
Thus $\sigma\in\mathbb{I}(\mathcal{D};d,n)$.\end{proof}
\begin{cor}
\label{cor:antisymmetric}$\mathbb{S}(\mathcal{D},\mathfrak{P};d,n)$
is a proper subalgebra of $\Gamma_{c}(\partial\mathcal{D},\mathfrak{M}^{*}(\mathfrak{P};d,n))$.
\end{cor}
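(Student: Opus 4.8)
The plan is to argue by contradiction, comparing centers. Suppose $\mathbb{S}(\mathcal{D},\mathfrak{P};d,n)$ were equal to $\Gamma_{c}(\partial\mathcal{D},\mathfrak{M}^{*}(\mathfrak{P};d,n))$; then the two algebras would have the same center. By Theorem~\ref{thm:Center} the center of $\mathbb{S}(\mathcal{D},\mathfrak{P};d,n)$ is $\mathbb{I}(\mathcal{D};d,n)$, while, as recorded in Section~\ref{sec:Function-Theory-in-Bundles} just before Theorem~\ref{thm:Center}, the center of $\Gamma_{c}(\partial\mathcal{D},\mathfrak{M}^{*}(\mathfrak{P};d,n))$ is $\mathfrak{Z}\Gamma_{c}(\partial\mathcal{D},\mathfrak{M}^{*}(\mathfrak{P};d,n))$, which the map $c\mapsto c\cdot\varepsilon$ identifies with $C(\partial\mathcal{D})$. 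So the Corollary reduces to showing that $\mathbb{I}(\mathcal{D};d,n)$ is a \emph{proper} subalgebra of $C(\partial\mathcal{D})$, and that is what I would prove.

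Assume, to the contrary, that $\mathbb{I}(\mathcal{D};d,n)=C(\partial\mathcal{D})$. Since $\partial\mathcal{D}$ is the Shilov boundary of $\mathbb{I}(\mathcal{D};d,n)$ regarded as a function algebra on $\overline{\mathcal{D}}$, restriction to $\partial\mathcal{D}$ is isometric on it, so $\mathbb{I}(\mathcal{D};d,n)$ on $\overline{\mathcal{D}}$ and its restriction to $\partial\mathcal{D}$ are isometrically isomorphic uniform algebras with the same maximal ideal space; by the description recalled in the Introduction, that space is the $\mathbb{I}_{0}(d,n)$-convex hull $\widehat{\overline{\mathcal{D}}}$, which contains $\overline{\mathcal{D}}$. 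On the other hand, the maximal ideal space of $C(\partial\mathcal{D})$ is $\partial\mathcal{D}$. Hence $\overline{\mathcal{D}}\subseteq\widehat{\overline{\mathcal{D}}}=\partial\mathcal{D}\subseteq\overline{\mathcal{D}}$, so $\overline{\mathcal{D}}=\partial\mathcal{D}=\widehat{\overline{\mathcal{D}}}$ and therefore $\mathbb{I}(\mathcal{D};d,n)=C(\overline{\mathcal{D}})$.

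To finish I would derive a contradiction from $\mathbb{I}(\mathcal{D};d,n)=C(\overline{\mathcal{D}})$. As noted in the Introduction, every member of $\mathbb{I}(\mathcal{D};d,n)$ is continuous on $\overline{\mathcal{D}}$ and holomorphic on $\mathcal{D}$, being a uniform limit on $\overline{\mathcal{D}}$ of the functions $\Psi(f)$, $f\in\mathbb{I}_{0}(d,n)$, each of which is the restriction to $\overline{\mathcal{D}}\subseteq Q_{0}(d,n)$ of a regular, hence holomorphic, function on the smooth locus of $Q(d,n)$. Since $d,n\geq2$, the complex manifold $Q_{0}(d,n)$ has positive dimension (indeed $\dim Q_{0}(d,n)=(d-1)n^{2}+1\geq5$ by the principal $G$-bundle structure of $\mathfrak{V}(d,n)$), so the nonempty open set $\mathcal{D}$ has more than one point; as $\mathbb{I}_{0}(d,n)$ separates the points of $Q(d,n)$, there is an $f\in\mathbb{I}_{0}(d,n)$ with $\Psi(f)$ non-constant on some connected component of $\mathcal{D}$. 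If $\mathbb{I}(\mathcal{D};d,n)$ were all of $C(\overline{\mathcal{D}})$, the conjugate $\overline{\Psi(f)}$ would also lie in $\mathbb{I}(\mathcal{D};d,n)$ and hence be holomorphic on $\mathcal{D}$; then $\Psi(f)$ would be simultaneously holomorphic and anti-holomorphic there, so locally constant, contradicting the choice of $f$. Thus $\mathbb{I}(\mathcal{D};d,n)\subsetneq C(\partial\mathcal{D})$, and the Corollary follows.

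The only step that is not mere bookkeeping is the passage from ``$\mathbb{I}(\mathcal{D};d,n)=C(\partial\mathcal{D})$'' to ``$\mathbb{I}(\mathcal{D};d,n)=C(\overline{\mathcal{D}})$'': \emph{a priori} $\partial\mathcal{D}$ may be a proper subset of $\overline{\mathcal{D}}$, so one must rule out that restricting the algebra to $\partial\mathcal{D}$ artificially enlarges it, and the coincidence of maximal ideal spaces is exactly what does this. If one prefers to avoid maximal ideal spaces, an alternative is available: self-adjointness of $\mathbb{I}(\mathcal{D};d,n)$ on $\partial\mathcal{D}$ would put $\operatorname{Re}\Psi(f)$ and $\operatorname{Im}\Psi(f)$, restricted to $\partial\mathcal{D}$, into $\mathbb{I}(\mathcal{D};d,n)$; since the holomorphic extension $\tilde u$ of $\operatorname{Re}\Psi(f)|_{\partial\mathcal{D}}$ is real on $\partial\mathcal{D}$, the functions $e^{\pm i\tilde u}\in\mathbb{I}(\mathcal{D};d,n)$ have modulus $1$ on $\partial\mathcal{D}$ and product $1$, so the Shilov property forces $|e^{i\tilde u}|\equiv1$ on $\overline{\mathcal{D}}$, whence $\tilde u$ is real-valued and holomorphic on $\mathcal{D}$, hence constant; applying this to $f$ and to $if$ shows each $\Psi(f)$ is constant on $\partial\mathcal{D}$, hence on $\overline{\mathcal{D}}$, once more contradicting separation of points.
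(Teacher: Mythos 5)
Your proof is correct and follows the same route the paper intends: Corollary \ref{cor:antisymmetric} is deduced from Theorem \ref{thm:Center} by comparing the center $\mathbb{I}(\mathcal{D};d,n)$ of $\mathbb{S}(\mathcal{D},\mathfrak{P};d,n)$ with the center $C(\partial\mathcal{D})\cdot\varepsilon$ of $\Gamma_{c}(\partial\mathcal{D},\mathfrak{M}^{*}(\mathfrak{P};d,n))$. The paper leaves the final step --- that the restriction of $\mathbb{I}(\mathcal{D};d,n)$ to $\partial\mathcal{D}$ cannot be all of $C(\partial\mathcal{D})$ --- implicit, and your maximal-ideal-space argument reducing to $\overline{\mathcal{D}}=\partial\mathcal{D}$ and then invoking holomorphy on the positive-dimensional open set $\mathcal{D}$ is a correct and careful way to supply it.
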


\section{Boundary Representations \label{sec:Boundary-Representations}}

In this section, we prove Theorem \ref{thm:Boundary theorem}. It
rests on a simple observation of Kleski \cite[Remark 3.4]{Kleski2014},
which is a corollary of his deep Theorem 3.1. Recall Arveson's definition
of a peaking representation.
\begin{defn}
\label{def: peaking} \cite[Definition 7.1]{Arv2011} Suppose $A$
is a norm closed subalgebra of a unital $C^{*}$-algebra $B$ that
generates $B$ as a $C^{*}$-algebra and contains the unit of $B$.
An irreducible $C^{*}$-representation $\pi:B\to B(H_{\pi})$ is called
a \emph{peaking} \emph{representation for} $A$ if there is an integer
$n\geq1$ and an $n\times n$ matrix $(a_{ij})\in M_{n}(A)$ such
that 
\[
\Vert(\pi(a_{ij}))\Vert\gneq\Vert(\sigma(a_{ij}))\Vert
\]
for every irreducible representation $\sigma$ for $B$ that is not
unitarily equivalent to $\pi$. We also say that $\pi$ peaks at $(a_{ij})$.
\end{defn}
Arveson defines the notion of a peaking representation in the context
of operator systems, i.e., unital, closed, and self-adjoint subspaces
of $C^{*}$-algebras. However, thanks to \cite[Proposition 1.2.8]{Arv1969a},
if a representation is peaking in the sense of our Definition \ref{def: peaking}
it is a peaking representation with respect to the operator system
generated by $A$, i.e., the norm-closure of $A+A^{*}$.

In \cite[Thereom 3.1]{Kleski2014}, Kleski proves that if $(a_{ij})$
is any element in $M_{n}(A)$ then there is a boundary representation
$\pi_{0}$ of $B$ for $A$ such that 
\begin{equation}
\Vert(a_{ij})\Vert=\Vert(\pi_{0}(a_{ij}))\Vert.\label{eq:bdry_extreme}
\end{equation}
As Kleski observes in \cite[Remark 3.4]{Kleski2014}, this implies
that a peaking representation is a boundary representation. Indeed,
if $\pi$ is an irreducble representation of $B$ that peaks at $(a_{ij})$,
then we would have $\Vert(a_{ij})\Vert\geq\Vert(\pi(a_{ij}))\Vert\gneq\Vert(\pi_{0}(a_{ij}))\Vert$
if $\pi_{0}\nsim\pi$, which would contradict \eqref{eq:bdry_extreme}.
Thus $\pi\sim\pi_{0}$ and therefore $\pi$ is a boundary representation.

\begin{proof}[Proof of Theorem \ref{thm:Boundary theorem}] To apply
these remarks to the situation of Theorem \ref{thm:Boundary theorem}
is very easy. Our $B$ is $\Gamma_{c}(\partial\mathcal{D},\mathfrak{M}^{*}(\mathfrak{P};d,n))$
and our $A$ is $\mathbb{S}(\mathcal{D},\mathfrak{P};d,n)$. Our hypothesis
is that $\mathfrak{u}\in\partial_{e}\mathcal{D}$ - the extreme boundary
of $\mathcal{D}$. Since $Q_{0}(d,n)$ is metrizable, so is $\overline{\mathcal{D}}$.
Therefore $\mathfrak{u}$ is a peak point in the function algebra
sense \cite[Theorem 1.7.26]{Stout1971}, i.e., there is a function
$f\in\mathbb{I}(\mathcal{D};d,n)$ such that $f(\mathfrak{u})=1$,
but $\vert f(\mathfrak{v})\vert\lneq1$ for all $\mathfrak{v}\neq\mathfrak{u}$.
But then, we may simply view $f$ as a $1\times1$ matrix over $\mathbb{S}(\mathcal{D},\mathfrak{P};d,n)$
and conclude that $ev_{\mathfrak{u}}$ peaks at $f$. Hence $ev_{\mathfrak{u}}$
is a boundary representation of $\Gamma_{c}(\partial D,\mathfrak{P};d,n)$
for $\mathbb{S}(\mathcal{D},\mathfrak{P};d,n)$. \end{proof}

\begin{proof}[Proof of Corollary \ref{cor:Shilov_boundary_ideal}]
Any section $\sigma\in\Gamma_{c}(\partial\mathcal{D},\mathfrak{M}^{*}(\mathfrak{P};d,n))$
in the kernel of $ev_{\mathfrak{u}}$ vanishes at $\mathfrak{u}$.
So any section in $\cap_{\mathfrak{u}\in\partial_{e}\mathcal{D}}\ker(ev_{\mathfrak{u}})$
vanishes on $\partial_{e}\mathcal{D}$. Since $\partial_{e}\mathcal{D}$
is dense in $\partial\mathcal{D}$ \cite[Theorem I.7.24]{Stout1971},
any such section is the zero section. Therefore, by Theorem \ref{thm:Boundary theorem},
the intersection of the kernels of the boundary representations, which
is the Shilov boundary ideal, must be zero, i.e., $\Gamma_{c}(\partial\mathcal{D},\mathfrak{M}^{*}(\mathfrak{P};d,n))$
is the $C^{*}$-envelope of $\mathbb{S}(\partial\mathcal{D},\mathfrak{P};d,n)$.
\end{proof}

\section{Azumaya Algebras\label{sec:Azumaya-Algebras}}

The proof of Theorem \ref{thm:Azumaya} is an application of Procesi's
extension \cite[Theorem VIII.2.1]{Procesi1973} of Artin's theorem
that was discussed earlier. A $d$-variable \emph{central polynomial}
for the $n\times n$ matrices is a nonzero polynomial $p$ in the
center of $\mathbb{G}_{0}(d,n)$ \emph{that is without constant term.
}It is not evident, \emph{a priori}, that such polynomials exist.
However, they do - for every $d$ - thanks to the work of Formanek
\cite{Formanek_1972} and Razmyslov \cite{Razmyslov_1974}. Procesi's
theorem asserts (among many things) that if $R$ is a ring satisfying
the identities of the $n\times n$ matrices then $R$ is an Azumaya
algebra if and only if $R=F(R)R$ - the ideal generated by the Formanek
center, $F(R)$. The Formanek center, in turn, is the collection of
elements in $R$ obtained by evaluating all the central polynomials
for the $d$ generic $n\times n$ matrices for all $d$ at all $d$-tuples
of elements of $R$. Here, of course, when forming $F(R)$, we are
viewing a $d$-variable central polynomial $p$ as an element $\mathbb{C}\langle X_{1},X_{2},\cdots,X_{d}\rangle$.
Notice, too, that when $R=\mathbb{S}(\mathcal{D},\mathfrak{P};d,n)$,
then a $p\in\mathbb{G}_{0}(d,n)\subseteq\mathbb{S}(\mathcal{D},\mathfrak{P};d,n)$
may be identified with its evaluation at the $d$ coordinate functions
$\mathcal{Z}_{i}$, i.e., $p=p(\mathcal{Z}_{1},\mathcal{Z}_{2},\cdots,\mathcal{Z}_{d})$,
where, recall, $\mathcal{Z}_{i}(\mathfrak{z})=Z_{i}$, if $\mathfrak{z}=(Z_{1},Z_{2},\cdots,Z_{d})$.
We require the following special case of a lemma of Reichstein and
Vonessen \cite[Lemma 2.10]{RV2007}: For every $\mathfrak{z}\in\mathcal{V}(d,n)$
there is a $d$-variable central polynomial $p$ such that $p(\mathfrak{z})=I_{n}$. 

\begin{proof}[Proof of Theorem \ref{thm:Azumaya}]Now $\mathbb{S}(\mathcal{D},\mathfrak{P};d,n)$
certainly satisfies the identities of the $n\times n$ matrices and
our hypothesis on $\mathcal{D}$ is that $\overline{\mathcal{D}}$
is the maximal ideal space of $\mathbb{I}(\mathcal{D};d,n)$. Also,
our Theorem \ref{thm:Center} tells us that $\mathbb{I}(\mathcal{D};d,n)$
is the center of $\mathbb{S}(\mathcal{D},\mathfrak{P};d,n)$. So given
any point $\mathfrak{u}\in\overline{\mathcal{D}}$, we choose a $\mathfrak{z}$
in the bundle space $\mathcal{P}$ of $\mathfrak{P}$ such that $\pi(\mathfrak{z})=\mathfrak{u}$.
Then, using the Reichstein-Vonessen lemma, we choose a $d$-variable
central polynomial $p$ such that $p(\mathfrak{z})=I_{n}$. Since
a central polynomial certainly is invariant, we may view $p$ as a
function on $Q(d,n)$ that is $1$ at $\mathfrak{u}$. So, by the
compactness of $\overline{\mathcal{D}}$ we may choose a finite number
of central polynomials, $p_{1},p_{2},\cdots,p_{N}$, that have no
common zero on $\overline{\mathcal{D}}$. It follows that $p_{1}\mathbb{I}(\mathcal{D};d,n)+p_{2}\mathbb{I}(\mathcal{D};d,n)+\cdots+p_{N}\mathbb{I}(\mathcal{D};d,n)=\mathbb{I}(\mathcal{D};d,n)$
and, \emph{a fortiori}, that $F(\mathbb{S}(\mathcal{D},\mathfrak{P};d,n))\mathbb{S}(\mathcal{D},\mathfrak{P};d,n)=\mathbb{S}(\mathcal{D},\mathfrak{P};d,n)$.
Thus, $\mathbb{S}(\mathcal{D},\mathfrak{P};d,n)$ is an Azumaya algebra.
\end{proof}

\section{Concluding Remarks \label{sec:Concluding-Remarks}}

One may wonder about the extent of our results. How comprehensive
are the examples they cover? While we have formulated our analysis
in the context of the trace algebra of the algebra of generic matrices,
everything we have written goes over without significant changes to
the more general situation of what Reichstein and Vonessen call $n$-varieties.
\begin{defn}
\label{def:n-varieties}\cite[Definition 3.1]{RV2007} An \emph{n-variety
}is a $G$-invariant subset $X$ of $\mathcal{V}(d,n)$, for some
$d\geq2$, with the property that $X=\overline{X}\cap\mathcal{V}(d,n)$
where $\overline{X}$ denotes the Zariski closure of $X$ in $M_{n}(\mathbb{C})^{d}$.
\end{defn}
When passing from $\mathcal{V}(d,n)$ to an $n$-variety, one replaces
$\mathbb{G}_{0}(d,n)$ by $\mathbb{G}_{0}(d,n)/\mathcal{I}(X)$, where
$\mathcal{I}(X):=\{p\in\mathbb{G}_{0}(d,n)\mid p(\mathfrak{z})=0,\,\mathfrak{z}\in X\}$.
The quotient $\mathbb{G}_{0}(d,n)/\mathcal{I}(X)$ is a noncommutative
analogue of the coordinate ring of an algebraic variety and the thrust
of \cite{RV2007} is that noncommutative algebraic geometry should
take place in the context of $n$-varieties, their coordinate rings,
and associated noncommutative function fields. These latter are central
simple algebras and each can be written as the algebra of rational
matrix concomitants mapping $X$ into $M_{n}(\mathbb{C})$. Further,
by \cite[Lemma 8.1]{RV2007}, every irreducible algebraic variety
on which $G$ acts freely on a Zariski open set is birational to an
irreducible $n$-variety. Thus, with technical adjustments, the results
we have discussed make sense at this level. 

In another direction, which we are currently investigating, the results
of \cite{Muhly2013} suggest how to replace $PGL(n,\mathbb{C})$ with
certain more complicated reductive groups and formulate a function
theory on quiver varieties and other structures that can be built
from $C^{*}$-correspondences. 

The work of Craw, Raeburn and Taylor \cite{Craw1983} was also a source
of inspiration for us. They introduced the notion of a Banach Azumaya
algebra over a commutative Banach algebra. Their purpose was to use
the theory of Azumaya algebras to illuminate the topological properties
of the maximal ideal space of the commutative Banach algebra. However,
it seems difficult to identify naturally occurring Azumaya Banach
algebras ``in the wild''. Our results, coupled with their Proposition
2.6, show that such algebras arise quite naturally and quite frequently.

The specific problem which led us to the results we have presented
here stems from \cite{Muhly1998a} and \cite{Muhly2013}. In \cite{Muhly1998a}
we identified the completely contractive representations of the tensor
algebra of a $C^{*}$-correspondence. When the correspondence is specialized
to complex $d$-space $\mathbb{C}^{d}$, one finds that the completely
contractive $n$-dimensional representations of the tensor algebra,
$\mathcal{T}_{+}(\mathbb{C}^{d})$, are parametrized by the closed
``disc'', $\overline{\mathbb{D}(d,n)}$, where $\mathbb{D}(d,n)=\{\mathfrak{z}\in M_{n}(\mathbb{C})^{d}\mid\Vert\mathfrak{z}\mathfrak{z}^{*}\Vert<1\}$.
When viewed simply as a subset of the complex space $\mathbb{C}^{dn^{2}}$,
$\mathbb{D}(d,n)$ is a classical symmetric domain. If $\mathbb{G}(d,n)$
(resp. $\mathbb{S}(d,n)$) is the closure of $\mathbb{G}_{0}(d,n)$
(resp. $\mathbb{S}_{0}(d,n)$) in $C(\overline{\mathbb{D}(d,n)},M_{n}(\mathbb{C}))$,
then $\mathbb{G}(d,n)$ is precisely the sup-norm closure of the algebra
of functions on $\overline{\mathbb{D}(d,n)}$ that one obtains from
evaluating the elements of $\mathcal{T}_{+}(\mathbb{C}^{d})$ on $\overline{\mathbb{D}(d,n)}$.
(Note that Arveson \cite{Arv1998} showed that in general $\mathbb{G}(d,n)$
is strictly larger than the algebra of evaluations from $\mathcal{T}_{+}(\mathbb{C}^{d})$.)
The elements of $\mathbb{G}(d,n)$ and $\mathbb{S}(d,n)$ are continuous
$M_{n}(\mathbb{C})$-valued functions on $\overline{\mathbb{D}(d,n)}$
that are analytic on $\mathbb{D}(d,n)$ and for each $f\in\mathbb{S}(d,n)$,
the maximum of $\Vert f(\mathfrak{z})\Vert$, for $\mathfrak{z}\in\overline{\mathbb{D}(d,n)}$,
is taken on the Shilov boundary of $\mathbb{D}(d,n)$, $\partial_{e}\mathbb{D}(d,n)$.
The question which motivated this paper is ``What are the boundary
representations for $\mathbb{G}(d,n$) and $\mathbb{S}(d,n)$ and
what are the $C^{*}$-envelopes of these algebras?'' For this, we
need to know how to describe the $C^{*}$-algebras they generate.

The functions in $\mathbb{S}(d,n)$ are $K$-concomitants, i.e., $f(k^{-1}\mathfrak{z}k)=k^{-1}f(\mathfrak{z})k$
for all $\mathfrak{z}\in\overline{\mathbb{D}(d,n)}$ and all $k\in K$.
Therefore, the natural place to study them is on the quotient space
$\overline{\mathbb{D}(d,n)}/K$, which is a compact Hausdorff space
on which all the continuous $K$-concomitants, $C(\overline{\mathbb{D}(d,n)},M_{n}(\mathbb{C}))^{K}$,
naturally live. This algebra, in turn, is naturally isomorphic to
the $C^{*}$-algebra of continuous cross sections of a certain $C^{*}$-bundle
of finite dimensional $C^{*}$-algebras over $\overline{\mathbb{D}(d,n)}/K$,
by \cite[Lemma 2.2]{Echterhoff_and_Emerson_2011}. However, it is
\emph{not} a homogeneous $C^{*}$-algebra because $K$ does not act
freely on $\overline{\mathbb{D}(d,n)}$. There are some obvious candidates
for the boundary representations of $C(\overline{\mathbb{D}(d,n)},M_{n}(\mathbb{C}))^{K}$
for $\mathbb{S}(d,n)$, but we do not yet know how to check them.
Problems with isotropy prevent us from applying the ideas that we
have presented above. Nevertheless, the algebra $\mathbb{S}(d,n)$
seems to have a lot in common with the algebras $\mathbb{S}(\mathcal{D},\mathfrak{P};d,n)$
that we have discussed here. We focused on these first because we
could avoid difficulty with isotropy. The algebras $\mathbb{S}(\mathcal{D},\mathfrak{P};d,n)$
turn out to be quite interesting in their own right, however, and
they deserve further exploration.

\section*{acknowledgement}We are very grateful to Zinovy Reichstein
for enlightening and helpful correspondence concerning this paper. 

\bibliographystyle{plain}
\bibliography{/Users/Paul_Muhly/Dropbox/BP_Share/Master_Bib_File/Master20150822}
\end{document}